\newtheorem{theorem}{Theorem}[section]
\newtheorem{claim}[theorem]{Claim}
\newtheorem{lemma}[theorem]{Lemma}
\newtheorem{proposition}[theorem]{Proposition}
\newtheorem{corollary}[theorem]{Corollary}
\newtheorem{fact}[theorem]{Fact}
\newtheorem*{thm.2.4}{Theorem \ref
{GlavinMainTheorem}}
\newtheorem*{cor2.23}{Corollary \ref{compactQpoint}}
\newtheorem*{thm.316}{Theorem \ref{thm.3.16}}
\newtheorem*{thm.317}{Theorem \ref{thm.3.17}}
\newtheorem*{cor.5.6}{Theorem \ref{ProductCor}}
\newtheorem*{cor.5.7}{Theorem \ref{PowerCor}}
\newtheorem*{thm6.10}{Theorem \ref{thm.bgclosedclass}}
\newtheorem*{thm.7.2}{Theorem
 \ref{analyse}}
\newtheorem*{thm.7.9}{Theorem \ref{Kunen-Paris-Model}}
\newtheorem*{thm.UA}{Theorem \ref{UACor}} 
\theoremstyle{definition}
\newtheorem{definition}[theorem]{Definition}
\newtheorem{example}[theorem]{Example}
\newtheorem{question}[theorem]{Question}
\theoremstyle{remark}
\newtheorem{remark}[theorem]{Remark}
\def\l{{\langle}}
\def\r{{\rangle}}
\def\mathunderaccent#1#2 {\let\theaccent#1\skewfactor#2
\mathpalette\putaccentunder}
\def\putaccentunder#1#2{\oalign{$#1#2$\crcr\hidewidth
\vbox to.2ex{\hbox{$#1\skew\skewfactor\theaccent{}$}\vss}\hidewidth}}
\def\smallbox#1{\leavevmode\thinspace\hbox{\vrule\vtop{\vbox
   {\hrule\kern1pt\hbox{\vphantom{\tt/}\thinspace{\tt#1}\thinspace}}
   \kern1pt\hrule}\vrule}\thinspace}
\newcommand{\FIN}{\mathrm{FIN}}
\newcommand{\ra}{\rightarrow}
\newcommand{\al}{\alpha}
\newcommand{\om}{\omega}
\newcommand{\sse}{\subseteq}
\newcommand{\fin}{\mathrm{fin}}
\title[Tukey of Fubini]{On  the Tukey types of Fubini products}
\date{\today}
\author{Tom Benhamou}
\address[Benhamou]{Department of Mathematics, Rutgers University, ,110 Frelinghuysen Road Piscataway, New Jersey 08854-8019}
\thanks{The research of the first author was supported by the National Science Foundation under Grant
DMS-2346680.}
\email{tom.benhamou@rutgers.edu}
\author{Natasha Dobrinen}
\address[Dobrinen]{Department of Mathematics, University of Notre Dame, Notre Dame, IN 46556, USA}
\email{ndobrine@nd.edu}
\thanks{The  research of the second author was supported by  National Science Foundation Grant DMS-2300896.}
\subjclass[2020]{03E02, 03E04, 03E05, 03E55, 06A06, 06A07}
\keywords{ultrafilter, $p$-point, rapid ultrafilter,  Tukey order, cofinal type, Fubini product}
\begin{document}
\let\labeloriginal\label
\let\reforiginal\ref
\def\ref#1{\reforiginal{#1}}
\def\label#1{\labeloriginal{#1}}
\maketitle
\begin{abstract}
We extend the class of ultrafilters $U$ over countable sets for which $U\cdot U\equiv_T U$,
extending several results from \cite{Dobrinen/Todorcevic11}.
In particular, we prove that 
for each countable ordinal $\al\ge 2$, the generic ultrafilter $G_\al$ forced by $P(\om^\alpha)/\fin^{\otimes\alpha}$  satisfy $G_\al\cdot G_\al\equiv_T G_\al$. This answers a question posed in \cite[Question 43]{Dobrinen/Todorcevic11}. Additionally, we establish  that Milliken-Taylor ultrafilters possess the property that $U\cdot U\equiv_T U$.
\end{abstract}

\section{Introduction}
The Tukey order of partially ordered sets finds its origins in the notion of Moore-Smith convergence \cite{Tukey40}, which generalizes the usual meaning of convergence of sequence to  \textit{net}, allowing to enlarge the class of topological spaces for which continuity is equivalent to continuity in the sequential sense. Formally, given two posets, $(P,\leq_P)$ and $(Q,\leq_Q)$ we say that $(P,\leq_P)\leq_T (Q,\leq_Q)$ if there is map $f:Q\rightarrow P$, which is cofinal, namely, $f''B$ is cofinal in $P$ whenever $B\subseteq Q$ is cofinal. Schmidt \cite{Schmidt55} observed that this is equivalent to having a map $f:P\rightarrow Q$, which is unbounded, namely, $f''\mathcal{A}$ is unbounded in $Q$ whenever $\mathcal{A}\subseteq P$ is unbounded in $P$.
We say that $P$ and $Q$ are {\em Tukey equivalent},
and write
$P\equiv_T Q$,
if $P\leq_T Q$ and $Q\leq_T P$;  the equivalence class $[P]_T$ is called the Tukey type or cofinal type of $P$.
It turns out that the Tukey order restricted to posets $(U,\supseteq)$, where $U$ is an ultrafilter, has a close relation to \textit{ultranets} and has been studied extensively on $\omega$ by Blass, Dobrinen,  Kuzeljevic, Milovich, Raghavan, Shelah, Todorcevic, Verner, and others (see for instance 
\cite{Blass/Dobrinen/Raghavan15, DobrinenJSL15, Dobrinen/Mijares/Trujillo14,Dobrinen/Todorcevic11,
Kuzeljevic/Raghavan18,Milovich08,
Raghavan/Shelah17,Raghavan/Verner19}).
Recently, the authors extended this investigation to the realm of large cardinals where they considered the Tukey order on $\sigma$-complete ultrafilters over a measurable cardinal $\kappa$  in \cite{TomNatasha}. On ultrafilters, the Tukey order is determined by functions which are (weakly) monotone\footnote{That is, $a\leq b\Rightarrow f(a)\leq f(b)$.} and have cofinal images. For this reason, the Tukey order is the order one expects to use when comparing the cofinality of ultrafilters. 
We refer the reader to \cite{DobrinenTukeySurvey15}
and \cite{DobrinenSEALS}  for surveys  of  the subject.

In this paper, we investigate the connection between the Tukey type of an ultrafilter $U$ and the Tukey type  of its Fubini product with itself, $U\cdot U$. 
It is easy to see that $U\le_T U\cdot U$ for every ultrafilter $U$.  The question is, for which ultrafilters does $U\equiv_T U\cdot U$ hold?
Already Dobrinen and Todorcevic \cite{Dobrinen/Todorcevic11} and Milovich \cite{Milovich12}, provide a significant understanding of the relation between these two Tukey types: Dobrinen and Todorcevic proved that whenever $U$ is a rapid $p$-point ultrafilter over $\omega$,  then $U\cdot U\equiv_T U$; Milovich proved that  $U\cdot U\cdot U\equiv_T U\cdot U$ for every nonprincipal ultrafilter $U$.
In contrast, trivial examples of
  ultrafilters $U$ with the property that 
$U\cdot U\equiv_T U$
 are the so-called \textit{Tukey-top} ultrafilters,  those ultrafilters which are maximal in the Tukey order among all ultrafilters on $\omega$. Such an ultrafilter was constructed (in ZFC) by Isbell \cite{Isbell65}
 and Juh\'{a}sz \cite{Juhasz67}; we denote this ultrafilter by $\mathcal{U}_{\text{top}}$. Henceforth, we shall only focus on nonprincipal ultrafilters $U$ such that $U<_T\mathcal{U}_{\text{top}}$.

 Dobrinen and Todorcevic also proved that for $p$-points, $U\cdot U\equiv_T U$ is equivalent to $U$ being Tukey above $(\omega^\omega,\leq)$, where $\leq$ refers to the everywhere domination order of functions.
 Furthermore, they provided an example of a p-point $U$ which is not Tukey above $\omega^\omega$, and in particular satisfying $U<_T U\cdot U$.
 They also 
 asked 
 whether, besides the  Tukey-top ultrafilters, the class of ultrafilters which are Tukey  equivalent
 to their Fubini product is a subclass of the class of basically generated ultrafilters.


\begin{question}{\cite[Q.43]{Dobrinen/Todorcevic11}}\label{Question}
    Does $U\cdot U\equiv_T U<\mathcal{U}_{\text{top}}$ imply that $U$ is basically generated?
\end{question}
Note that on measurable cardinals, the situation is quite different, as  every $\kappa$-complete ultrafilter $U$ over $\kappa$ satisfies that $U\cdot U\equiv_T U$ \cite[Thm 5.6]{TomNatasha}.

In this paper, we provide a negative answer to this question by analyzing the Tukey type of the Fubini powers of generic ultrafilters obtained by $\sigma$-closed  forcings of the form $P(X)/I$.
We define the {\em $I$-pseudo intersection property}
(Definition \ref{def.Ipip})
as a way of abstracting the notion of p-point,
and use it to extend work in \cite{Dobrinen/Todorcevic11} and \cite{Milovich12}
to provide an  abstract condition  which guarantees that $U\cdot U\equiv_T U$
(Corollary \ref{Cor: Sufficient condition for product}).
We  then provide an equivalent formulation for $U$ being Tukey equivalent to itself 
(Theorem \ref{thm.main}), generalizing \cite[Thm.\ 35]{Dobrinen/Todorcevic11}.

In Section 2, we  apply Theorem \ref{thm.main}
to the  forcing 
$P(\om\times\om)/I$, where 
$I=\fin\otimes\fin:=\fin^{\otimes 2}$ over $\omega\times\omega$. 
This forcing was first investigated  by Szym\'{a}nski and Zhou \cite{Szymanski/Zhou} 
and later by many others.
We show that any generic ultrafilter  $G_2$ for $P(\omega\times\omega)/\fin^{\otimes 2}$  has the property that $G_2\cdot G_2\equiv_T G_2$.  Blass, Dobrinen and Raghavan \cite{Blass/Dobrinen/Raghavan15} showed that  $G_2$ 
is not  basically generated (so in particular is not a p-point) but also 
is not Tukey-top. 
 Dobrinen  proved in \cite{DobrinenJSL15} that $G_2$ is in fact the Tukey immediate successor of its projected Ramsey ultrafilter, so at the same level as a 
weakly Ramsey ultrafilter (see \cite{Dobrinen/Todorcevic14}) in the Tukey hierarchy.

In Section 3, we investigate ultrafilters  $G_{\al}$ obtained by forcing $P(\om^\alpha)/\fin^{\otimes\alpha}$, for any $2\le \alpha<\omega_1$. We analyze the Tukey type of $\fin^{\otimes\alpha}$ and prove that, for all $\al<\om_1$, $G_{\al}\equiv_TG_{\al}\cdot G_{\al}$
(Theorem \ref{thm.finalpha}).
Such ultrafilters  are not p-points and also not basically generated,
and each 
$G_\alpha$ Rudin-Keisler projects onto generic ultrafilters $G_\beta$ for $\beta<\alpha$. 
By results of Dobrinen \cite{DobrinenJSL15,DobrinenJML16, DobrinenInPrep}, such ultrafilters are  non-Tukey top, and moreover, are quite low in the Tukey hierarchy.
For instance,
for $ k<\omega$, the sequence $\l G_n\mid  n\leq k\r$ form an exact Tukey chain in the sense that if $V\leq_TG_k$, then there is $ n\leq k$, $G_{n}\equiv_T V$. Related results holds for $\omega\leq \alpha<\omega_1$.

Finally, 
in Section 4, we prove that 
 if $U$ is a Milliken-Taylor ultrafilter then also $U$ satisfies our equivalent condition and therefore $U\cdot U\equiv_T U$
 (Theorem \ref{thm.MillikenTaylor}).
Milliken-Taylor ultrafilters as well as those forced by $P(\om^{\al})/\fin^{\otimes\al}$ are not basically generated and in particular p-points, providing examples which answer Question \ref{Question} in the negative.

One question which our results are relevant for but remains open, is  whether $U\cdot V\equiv_T V\cdot U$ for any ultrafilter $U,V$ over $\omega$.
Corollaries \ref{cor.1.9} and \ref{cor.1.10} provide some progress on this question.
This again contrasts with ultrafilters on 
 a measurable cardinal $\kappa$:  work of the authors in  \cite{TomNatasha} showed that if $U,V$ are $\kappa$-complete ultrafilters over $\kappa$, then $U\cdot V\equiv_T V\cdot U$. The proof essentially uses the well-foundedness of ultrapowers by $\kappa$-complete ultrafilters and therefore does not apply for ultrafilters over $\omega$.
This is discussed in Section 1.

\subsection{Notation} $[X]^{<\lambda}$ denotes the set of all subsets of $X$ of cardinality less than $\lambda$. Let $\fin=[\omega]^{<\omega}$, and $\FIN=\fin\setminus\{\emptyset\}$.
For a collection of sets $(P_i)_{i\in I}$ we let $\prod_{i\in I}P_i=\{f:I\rightarrow \bigcup_{i\in I}P_i\mid \forall i,\, f(i)\in P_i\}$. Given a set $X\subseteq \omega$, such that $|X|=\alpha\leq\omega$, we denote by $\l X(\beta)\mid \beta<\alpha\r$ be the increasing enumeration of $X$. Given a function $f:A\rightarrow B$, for $X\subseteq A$ we let $f''X=\{f(x)\mid x\in X\}$ and for $Y\subseteq B$ we let $f^{-1}Y=\{x\in X\mid f(x)\in Y\}$. Given sets $\{A_i\mid i\in I\}$ we denote by $\biguplus_{i\in I}A_i$ the union of the $A_i$'s when the sets $A_i$ are pairwise disjoint.

\section{The Tukey type of a Fubini product}

Given $\mathcal{A}\subseteq P(X)$, we set $\mathcal{A}^*=\{X\setminus A\mid A\in\mathcal{A}\}$. For a filter $F$ over $X$,
we denote the \textit{dual ideal} by $F^*$,
and given an ideal $I$ we denote the \textit{dual filter} by $I^*$. 
The following fact is easy to verify:
\begin{fact}\label{fact: dual ideal tukey}
    For any filter $F$, $(F,\supseteq)\equiv_T(F^*,\subseteq)$.
\end{fact}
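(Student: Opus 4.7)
The plan is to observe that complementation gives an outright order isomorphism between the two posets, which trivially implies Tukey equivalence in both directions. There is no real obstacle here; the fact is essentially tautological, but it is worth writing down once so it can be invoked silently throughout the rest of the paper (e.g.\ when arguing about cofinal maps, it is often more convenient to work with the ideal).

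Concretely, I would define $\varphi : F \to F^{*}$ by $\varphi(A) = X \setminus A$. Since $F^{*}$ is defined as $\{X \setminus A \mid A \in F\}$, the map $\varphi$ is well-defined and surjective, and it is injective because $A \mapsto X \setminus A$ is an involution on $P(X)$. Next I would verify the order-reversing/preserving compatibility: for $A,B \in F$, the relation $A \supseteq B$ holds if and only if $X \setminus A \subseteq X \setminus B$. Thus $\varphi$ is an isomorphism of the posets $(F,\supseteq)$ and $(F^{*},\subseteq)$.

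Finally I would invoke the general fact that if two posets are order isomorphic via some $\varphi$, then $\varphi$ is cofinal and $\varphi^{-1}$ is cofinal, so each poset is Tukey below the other; equivalently, $\varphi$ is simultaneously cofinal and unbounded in Schmidt's sense. This yields $(F,\supseteq) \equiv_{T} (F^{*},\subseteq)$, as required. The proof can therefore be compressed to a single sentence remarking that $A \mapsto X \setminus A$ is an order isomorphism.
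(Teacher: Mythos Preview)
Your proof is correct and is precisely the argument the paper has in mind; the paper itself omits the proof entirely, stating only that the fact ``is easy to verify,'' and your complementation isomorphism is the natural one-line justification.
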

An ultrafilter over $X$ is a filter $U$ such that for every $A\in P(X)$, either $A\in U$ or $X\setminus A\in U$. So for ultrafilters we have that $U^*=P(X)\setminus U$. As the title of this section indicates, we are interested in the Fubini product of ultrafilters:
\begin{definition}
    Suppose that $U$ is a filter over $X$ and for each $x\in X$, $U_x$ is a filter over $Y_x$. We denote by $\sum_UU_x$ the filter over $\bigcup_{x\in X}\{x\}\times Y_x$, defined by
    $$A\in \sum_UU_x\text{ if and only if }\{x\in X\mid (A)_x\in U_x\}\in U$$
    where $(A)_x=\{y\in Y_x\mid \l x,y\r\in A\}$.  If for every $x$, $U_x=V$ for some fixed $V$ over a set $Y$, then $U\cdot V$ is defined as $\sum_UV$, which is a filter over $X\times Y$. $U^2$ denotes the filter $U\cdot U$ over $X\times X$. 
\end{definition}
It is well known that if $U$ and each $U_x$ are ultrafilters, then also $\sum_UU_x$ is an ultrafilter (see for example see \cite{Blass1970}). 

\begin{fact}\label{fact: reduce to ultrafilters on omega}
    If $U,V$ are filters over countable sets $X,Y$ respectively, then $U\equiv_{RK}U'$, $V\equiv_{RK}V'$ for some filters $U',V'$ over $\omega$ and $U\cdot V\equiv_{RK}U'\cdot V'$.
\end{fact}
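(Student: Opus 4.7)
The plan is to reduce everything to bijections. Since $X$ and $Y$ are countable, fix bijections $\phi \colon X \to \omega$ and $\psi \colon Y \to \omega$ (the degenerate finite cases are handled by bijecting onto an initial segment of $\omega$ and trivially extending). Define the pushforward filters
\[
U' = \{ A \subseteq \omega : \phi^{-1}(A) \in U \}, \qquad V' = \{ B \subseteq \omega : \psi^{-1}(B) \in V \}.
\]
Since $\phi$ and $\psi$ are bijections, they immediately witness $U \equiv_{RK} U'$ and $V \equiv_{RK} V'$ (the inverse maps furnish the reverse RK reductions).

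For the Fubini product, I would exhibit the bijection $\phi \times \psi \colon X \times Y \to \omega \times \omega$ and verify that it witnesses $U \cdot V \equiv_{RK} U' \cdot V'$. This is a naturality check: start with $C \subseteq \omega \times \omega$, unfold the definition $C \in U' \cdot V'$ iff $\{ n \in \omega : (C)_n \in V' \} \in U'$, then substitute the definitions of $U'$ and $V'$, and apply the identity
\[
\psi^{-1}\bigl( (C)_{\phi(x)} \bigr) \;=\; \bigl( (\phi \times \psi)^{-1}(C) \bigr)_x
\]
that says fibers commute with the product bijection. Chaining the equivalences yields $C \in U' \cdot V'$ iff $(\phi \times \psi)^{-1}(C) \in U \cdot V$, which is exactly the RK reduction witnessed by $\phi \times \psi$, and the inverse of this bijection gives the reverse direction.

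The main step is really the one-line identity about preimages of fibers, and there is no serious obstacle here; the content of the fact is simply that the constructions of pushforward filter and Fubini product commute with product bijections. The only mild care needed is in the finite case, where ``filter on $\omega$'' must be interpreted as a filter on a countable set that is in bijection with $\omega$ (or with a finite subset thereof); since the paper's subsequent applications are to nonprincipal ultrafilters on countably infinite sets, this wrinkle is harmless.
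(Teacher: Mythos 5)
Your argument is correct: the paper states this as a Fact without proof, and your proof via the product bijection $\phi\times\psi$ together with the fiber identity $\psi^{-1}\bigl((C)_{\phi(x)}\bigr)=\bigl((\phi\times\psi)^{-1}(C)\bigr)_x$ is exactly the routine verification being left to the reader. The only caveat, which you already flag, is that ``countable'' should be read as countably infinite for the bijections to exist, and that reading is consistent with every use of the Fact in the paper.
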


Given posets $(P_i,\leq_i)_{i\in I}$ we let $\prod_{i\in I}(P_i,\leq_i)$ be the ordered set $\prod_{i\in I}P_i$ with the pointwise order derived from the orders $\leq_i$. We call this order on $\prod_{i\in I}P_i$, the \textit{everywhere domination order}.
We will omit the order when it is the natural order. In particular, any ordinal $\alpha$ is ordered naturally by $\in$, $\omega^\omega=\prod_{n<\omega}\omega$ is ordered by everywhere domination, and $\prod_{n<\omega}\omega^\omega$ is the everywhere domination   order where each $\omega^\omega$ is again ordered by everywhere domination. 

    \begin{fact}\label{fact: carinality of index set of product}
        If $A,B$ are any sets with the same cardinality and $(P,\preceq)$ is an ordered set, then  $\prod_{a\in A}(P,\preceq)$ and $\prod_{b\in B}(P,\preceq)$ equipped with the everywhere domination orders are order isomorphic.
    \end{fact}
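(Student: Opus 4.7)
The plan is to fix any bijection $\phi:A\to B$ and use it to transport coordinates between the two products. Define $\Phi:\prod_{a\in A}P\to\prod_{b\in B}P$ by $\Phi(f)=f\circ\phi^{-1}$, i.e., $\Phi(f)(b)=f(\phi^{-1}(b))$ for each $b\in B$. The inverse is $\Phi^{-1}(g)=g\circ\phi$, so $\Phi$ is immediately a bijection between the underlying sets.

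Next I would verify that $\Phi$ preserves the everywhere-domination order in both directions. Unwinding definitions: $f\preceq_A g$ in $\prod_{a\in A}(P,\preceq)$ iff $f(a)\preceq g(a)$ for all $a\in A$. Substituting $a=\phi^{-1}(b)$ and using that $\phi^{-1}$ is a bijection from $B$ onto $A$, this is equivalent to $f(\phi^{-1}(b))\preceq g(\phi^{-1}(b))$ for all $b\in B$, which is exactly $\Phi(f)\preceq_B\Phi(g)$. So $\Phi$ is an order isomorphism.

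There is essentially no obstacle here: the fact is purely a bookkeeping statement that the product functor on posets depends only on the cardinality of the index set, and the bijection $\phi$ between $A$ and $B$ (which exists by hypothesis) does all the work. The only point worth noting is that the same reindexing works uniformly for any ordered set $(P,\preceq)$, so no further assumption on $P$ is needed.
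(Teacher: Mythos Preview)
Your proof is correct and is exactly the natural argument. The paper itself does not supply a proof for this fact---it is stated without proof as an obvious observation---so there is nothing to compare against; your reindexing via a bijection $\phi:A\to B$ is precisely the intended (and essentially the only) route.
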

    The next theorem of Dobrinen and Todorcevic  provides an upper bound for the Fubini product of ultrafilters via the Cartesian  product.
\begin{theorem}[Dobrinen-Todorcevic, Thm.\ 32, \cite{Dobrinen/Todorcevic11}]\label{Theorem: basic bound for product}
    $\sum_UU_x\leq_T U\times \prod_{x\in X}U_x$. In particular, $U\cdot V\leq_T U\times \prod_{x\in X}V$ and $U\cdot U\leq_T \prod_{x\in X}U$.
\end{theorem}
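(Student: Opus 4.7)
The plan is to build an explicit cofinal map $f$ from $U\times \prod_{x\in X}U_x$ (with componentwise reverse inclusion) to $\sum_UU_x$ (with reverse inclusion). The natural candidate is
\[
f\bigl(A,(B_x)_{x\in X}\bigr) \;=\; \bigcup_{x\in A}\{x\}\times B_x,
\]
which takes a ``base'' set $A\in U$ together with a choice of fiber $B_x\in U_x$ for each $x$ and assembles them into a single subset of $\bigcup_{x\in X}\{x\}\times Y_x$. First I would verify $f(A,(B_x))\in\sum_U U_x$: for every $x\in A$, the $x$-fiber of $f(A,(B_x))$ is exactly $B_x\in U_x$, so the set $\{x\in X:(f(A,(B_x)))_x\in U_x\}$ contains $A\in U$ and is therefore itself in $U$. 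Monotonicity of $f$ with respect to the chosen orderings is immediate from the formula.

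The heart of the proof is to show that $f$ has cofinal image in $\sum_UU_x$; combined with monotonicity and directedness of both posets, this yields that $f$ is a cofinal map in the sense of the paper's definition. Given any $S\in\sum_UU_x$, set $A_S=\{x\in X:(S)_x\in U_x\}$, which lies in $U$ by the very definition of $\sum_UU_x$. For $x\in A_S$ put $B_x^S=(S)_x\in U_x$, and for $x\notin A_S$ let $B_x^S$ be any element of $U_x$ (for instance $Y_x$). Then $(A_S,(B_x^S))\in U\times\prod_{x\in X}U_x$ and
\[
f\bigl(A_S,(B_x^S)\bigr)\;=\;\bigcup_{x\in A_S}\{x\}\times (S)_x\;\subseteq\;S,
\]
witnessing cofinality. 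This establishes $\sum_UU_x\leq_T U\times\prod_{x\in X}U_x$.

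The two ``in particular'' statements follow by specialization. Taking $U_x=V$ for every $x\in X$ gives $U\cdot V\leq_T U\times\prod_{x\in X}V$ directly. For the sharper bound $U\cdot U\leq_T\prod_{x\in X}U$, I would combine the general inequality with Fact \ref{fact: carinality of index set of product}: since $X$ is an infinite (countable) index set, $X\sqcup\{*\}$ has the same cardinality as $X$, so $U\times\prod_{x\in X}U$ is order isomorphic to $\prod_{x\in X\sqcup\{*\}}U$, which is in turn order isomorphic to $\prod_{x\in X}U$.

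I do not expect any genuine obstacle: the natural coding of Fubini-sum elements as pairs consisting of a base in $U$ together with fiberwise data in the $U_x$'s automatically supplies both the well-definedness and the cofinal-image property. The only point requiring a moment of care is the implicit use of the standard fact that, for a monotone map between directed posets, having cofinal image is equivalent to sending cofinal subsets to cofinal subsets, which is what the paper's definition of cofinal map demands.
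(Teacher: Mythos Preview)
Your proof is correct and matches the paper's approach: the paper does not give a self-contained proof of this theorem (it is cited from \cite{Dobrinen/Todorcevic11}), but in the proof of Proposition~\ref{Prop: equivalence with omega product} it records exactly your map $F(A,\langle Y_x\mid x\in X\rangle)=\bigcup_{x\in A}\{x\}\times Y_x$ and notes that it is monotone and cofinal. Your handling of the ``in particular'' clause $U\cdot U\le_T\prod_{x\in X}U$ via Fact~\ref{fact: carinality of index set of product} (absorbing the extra $U$ factor since $|X\sqcup\{*\}|=|X|$ for infinite $X$) is also the intended one.
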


Towards answering a question from \cite{Dobrinen/Todorcevic11},
Milovich  improved the previous theorem 
over $\omega$. Let us give  a slight variation of his proof, for we will need it later: 
\begin{proposition}[Milovich, Lemma 5.1, \cite{Milovich12}]
\label{Prop: equivalence with omega product}
For filters $U,V$ over countable sets $X,Y$ (resp.), $U\cdot V\equiv_T U\times\prod_{x\in X} V$. In particular $U\cdot U\equiv_T\prod_{x\in X} U$.
\end{proposition}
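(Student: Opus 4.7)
My plan is to prove Proposition \ref{Prop: equivalence with omega product} by establishing the nontrivial inequality $U\times \prod_{x\in X}V \leq_T U\cdot V$; the reverse inequality is exactly Theorem \ref{Theorem: basic bound for product}. Using Facts \ref{fact: reduce to ultrafilters on omega} and \ref{fact: carinality of index set of product}, I can assume $X = \omega$ and enumerate $X = \{x_n\mid n<\omega\}$. For $W\in U\cdot V$, I will set $A(W):=\{n<\omega\mid (W)_{x_n}\in V\}\in U$, let $\phi_W$ be the (partial) increasing enumeration of $A(W)$, define $B_n(W) := (W)_{x_{\phi_W(n)}}\in V$ (with $B_n(W)$ equal to some fixed member of $V$ when $n\geq |A(W)|$), and consider
\[
f(W) := (A(W),\,\l B_n(W)\mid n<\omega\r)\in U\times V^\omega.
\]

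The heart of the argument is to verify that $f$ is a cofinal (Tukey) map, i.e.\ to produce, for each target in $U\times V^\omega$, a witness $W^*\in U\cdot V$ such that every $W\subseteq W^*$ in $U\cdot V$ has $f(W)$ above the target. Given $(A_0,\l B_{0,n}\mid n<\omega\r)\in U\times V^\omega$, I will take
\[
W^* := \bigcup_{n\in A_0}\{x_n\}\times\bigcap_{m\leq n}B_{0,m}\in U\cdot V,
\]
noting each fiber $\bigcap_{m\le n}B_{0,m}$ lies in $V$ as a finite intersection. For any $W\subseteq W^*$ in $U\cdot V$, my claim is that $f(W)\geq (A_0,\l B_{0,n}\mid n<\omega\r)$. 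Indeed $A(W)\subseteq A(W^*)=A_0$, and writing $k := \phi_W(n)\in A(W)\subseteq A_0$, the inequality $\phi_W(n)\geq n$ (by monotonicity of $\phi_W$) yields
\[
B_n(W) = (W)_{x_k}\subseteq (W^*)_{x_k} = \bigcap_{m\leq k}B_{0,m}\subseteq B_{0,n}.
\]
This establishes $U\times V^\omega \leq_T U\cdot V$. The ``in particular'' clause follows from $U\leq_T \prod_{x\in X}U$ (by projection to any coordinate), which gives $U\times\prod_{x\in X}U\equiv_T \prod_{x\in X}U$ and hence $U\cdot U\equiv_T \prod_{x\in X}U$.

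The main subtlety, and the place where the argument departs from the naive attempt, is the definition $B_n(W):=(W)_{x_{\phi_W(n)}}$, enumerating the positive set $A(W)$ rather than using $x_n$ directly: the naive choice $B_n(W) = (W)_{x_n}$ would not produce an element of $V^\omega$, since $(W)_{x_n}$ need not lie in $V$ when $n\notin A(W)$. The shift $\phi_W(n)\geq n$ is then precisely what lets the ``diagonal'' intersection $\bigcap_{m\leq n}B_{0,m}$ in $W^*$ control $B_n(W)$ by $B_{0,n}$. The remaining details are the routine verifications that $W^*\in U\cdot V$ and that the final chain of inclusions goes through as written.
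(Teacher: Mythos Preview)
Your proof is correct and follows essentially the same route as the paper's: both hinge on enumerating the ``positive'' fibers of a set in $U\cdot V$ and on the diagonal witness $W^*=\bigcup_{n\in A_0}\{n\}\times\bigcap_{m\le n}B_{0,m}$, with the key inequality $\phi_W(n)\ge n$ doing the work. The only packaging difference is that the paper first passes to the cofinal base $\mathcal X\subseteq U\cdot V$ of sets whose fibers are empty-or-in-$V$ and \emph{decreasing}, and then checks that the resulting map is monotone; you instead define $f$ on all of $U\cdot V$ and verify Tukey-cofinality directly via the witness $W^*$, which sidesteps both the verification that $\mathcal X$ is a base and the monotonicity argument.
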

\begin{remark}
    We cannot prove  in general that $\sum_UU_n\equiv_TU\times\prod_{n<\omega}U_n$. For example, if $U$ is such that $U\cdot U\equiv_TU$ (e.g., $U$ is Ramsey) and $U_0$ is Tukey-top while $U_n=U$ for every $n>0$, then $\sum_UU_n=U\cdot U\equiv_TU<_TU_0$ but $\prod_{n<\omega}U_n\geq_T U_0$, so we have $\sum_UU_n\not\equiv_T U\times \prod_{n<\omega}U_n$. 
    Similar examples can be constructed even when all the $U_n$'s are distinct, for example, requiring that for every $n>0$, $U_n\leq_T U$ for some $U$ such that $U\cdot U\equiv_T U$.
    Such examples are constructed in this paper in Section 3: Take $U$ to be a generic ultrafilter for $P(\omega^\omega)/\fin^{\otimes\omega}$, and $U_n$ the Rudin-Keisler projection of $U$ to a generic on $P(\omega^n)/\fin^{\otimes n}$.
    
\end{remark}
\begin{proof}
    The proof of Theorem 
    \ref{Theorem: basic bound for product}
    does not use the fact that the partial orders are ultrafilters, and we have that $U\cdot V\leq_T U\times\prod_{x\in X} V$. (Indeed, the map $F:U\times\prod_{x\in X}V\rightarrow U\cdot V$ defined by $F(A,\l Y_x\mid x\in X\r)=\bigcup_{x\in A}\{x\}\times Y_x$ is monotone and cofinal). For the other direction, by Facts \ref{fact: reduce to ultrafilters on omega} and \ref{fact: carinality of index set of product},   we may assume that $X=Y=\omega$. Let us define a cofinal map from a cofinal subset of $U\cdot V$ to $U\times \prod_{n<\omega}V$. Consider the collection $\mathcal{X}\subseteq U\cdot V$ of all $A\subseteq \omega\times \omega$ such that:
    \begin{enumerate}
        \item for all $n<\omega$, either $(A)_n=\emptyset$ or $(A)_n\in V$.
        \item $\pi''A\in U$ and for all $n_1,n_2\in\pi_1''A$, if $n_1<n_2$ then $(A)_{n_2}\subseteq (A)_{n_1}$.
    \end{enumerate}
    It is not hard to prove that $\mathcal{X}$ is a filter base for $U\cdot V$. Let $F:\mathcal{X}\rightarrow U\times\prod_{n<\omega}V$ be defined by $F(A)=\l \pi''A,\l(A)_{(\pi_1''A)(n)}\mid n<\omega\r\r$ where $\l(\pi_1''A)(n)\mid n<\omega\r$ is the increasing enumeration of $\pi_1''A$. Let us prove that $F$ is monotone and cofinal. Suppose that $A,B \in \mathcal{X}$ are such that $A\subseteq B$. Then
    \begin{enumerate}
        \item [a.] $\pi''A\subseteq \pi'' B$;
        \item [b.] for every $n<\omega$, $(\pi''A)(n)\geq(\pi''B)(n)$;
        \item [c.] for every $m<\omega$, $(A)_m\subseteq (B)_m$.
    \end{enumerate} 
    By requirement $(2)$ of sets in $\mathcal{X}$, for every $n<\omega$,
    $$(A)_{(\pi''A)(n)}\subseteq (B)_{(\pi''A)(n)}\subseteq (B)_{(\pi''B)(n)}.$$
    It follows that $F(A)\geq F(B)$. To see that  $F$ is cofinal, let $\l B,\l B_n\mid n<\omega\r\r\in U\times\prod_{n<\omega}V$. Define $A=\bigcup_{n\in B}\{n\}\times (\bigcap_{m\leq n}B_m)$. Then $\pi''A=B$ and it is straightforward that $A\in\mathcal{X}$. We claim that for every $n$, $F(A)_n\subseteq B_n$. Indeed, $B(n)=(\pi''A)(n)\geq n$ and therefore $F(A)=\l B,\l A_n\mid n<\omega\r\r$ where $A_n=\bigcap_{m\leq B(n)}B_m\subseteq B_n$.  
\end{proof}
Milovich used this proposition to deduce the following, answering a question in \cite{Dobrinen/Todorcevic11}
and improving  a result in \cite{Dobrinen/Todorcevic11} which showed that (2) below holds if $F$ and $G$ are both rapid p-points:
\begin{theorem}[Milovich, Thms.\ 5.2, 5.4, \cite{Milovich12}]
    \begin{enumerate}
        \item For any filters $F,G$, \\
        $F\cdot G\cdot G\equiv_T F\cdot G$. In particular,  $F^{\otimes 2}\equiv_TF^{\otimes 3}$.
        \item If $F,G$ are $p$-filters, then $F\cdot G\equiv_TG\cdot F$.
    \end{enumerate}
\end{theorem}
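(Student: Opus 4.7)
The plan is to derive both parts from Proposition \ref{Prop: equivalence with omega product}, which recasts a Fubini product as a Cartesian product $U \cdot V \equiv_T U \times \prod_{n<\omega} V$ over countable index sets, together with the identity $\prod_{n<\omega} V \times \prod_{n<\omega} V \equiv_T \prod_{n<\omega} V$ supplied by Fact \ref{fact: carinality of index set of product} (via the bijection $\omega \sqcup \omega \leftrightarrow \omega$).

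For part (1), I apply Proposition \ref{Prop: equivalence with omega product} to $F \cdot G \cdot G$, viewing $F \cdot G$ as a filter over the countable set $X \times Y$: this gives $F \cdot G \cdot G \equiv_T (F \cdot G) \times \prod_{n<\omega} G$. A second application gives $F \cdot G \equiv_T F \times \prod_{n<\omega} G$, so altogether
\[
F \cdot G \cdot G \equiv_T F \times \prod_{n<\omega} G \times \prod_{n<\omega} G \equiv_T F \times \prod_{n<\omega} G \equiv_T F \cdot G,
\]
the middle step being Fact \ref{fact: carinality of index set of product}.

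For part (2), Proposition \ref{Prop: equivalence with omega product} reduces the problem to showing $F \times \prod_{n<\omega} G \equiv_T G \times \prod_{n<\omega} F$ when $F,G$ are $p$-filters. I would establish this through the intermediate lemma that for every nonprincipal $p$-filter $H$ on $\omega$ extending Fr\'echet, $\prod_{n<\omega} H \equiv_T H \times \omega^\omega$. Granting the lemma, both sides are Tukey equivalent to $F \times G \times \omega^\omega$, and the desired symmetry is immediate.

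The easy direction $H \times \omega^\omega \leq_T \prod_{n<\omega} H$ of the lemma is witnessed by the unbounded map $(C_n)_n \mapsto (C_0,\langle \min C_n\mid n<\omega\rangle)$; since $[k,\omega) \in H$ for every $k$, its second coordinate is cofinal in $\omega^\omega$. The content lies in the reverse inequality: I exhibit the monotone map $(A,f) \mapsto \langle A \setminus f(n)\mid n<\omega\rangle : H \times \omega^\omega \to \prod_{n<\omega} H$ and verify cofinality as follows. Given $(C_n) \in \prod_n H$, the $p$-filter property supplies a pseudo-intersection $A \in H$ with $A \subseteq^* C_n$ for all $n$; setting $f(n) = \max(A \setminus C_n)+1$ forces $A \setminus f(n) \subseteq C_n$ as required. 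This pseudo-intersection step is the main obstacle, and the only place the $p$-filter hypothesis is genuinely used, which is consistent with the fact that commutativity cannot be expected for arbitrary filters.
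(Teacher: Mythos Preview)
The paper does not supply its own proof of this theorem—it simply quotes the statement and cites Milovich \cite{Milovich12}. Your argument is correct and is the natural way to derive both parts from Proposition \ref{Prop: equivalence with omega product} (which is Milovich's Lemma 5.1), so it almost certainly coincides with Milovich's original proof.

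Two minor points. First, in your ``easy direction'' you call the map $(C_n)_n \mapsto (C_0,\langle \min C_n\mid n<\omega\rangle)$ \emph{unbounded}, but what you need—and what your next clause actually argues—is that it is \emph{cofinal}; an unbounded map $\prod_n H\to H\times\omega^\omega$ would witness the opposite Tukey inequality. This is just a slip in terminology, not in content. Second, your lemma $\prod_{n<\omega} H\equiv_T H\times\omega^\omega$ genuinely requires $H$ to contain the Fr\'echet filter: you use $[k,\omega)\in H$ both to make the $\min$-coordinates range cofinally in $\omega^\omega$ and to guarantee $A\setminus f(n)\in H$ in the reverse direction. This hypothesis is not explicit in the theorem as stated. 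It is harmless in practice—if $\bigcap H\ne\emptyset$ one passes to the restriction of $H$ to the complement of its kernel and reduces to the free case—but you should add a sentence handling the degenerate situation.
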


For the rest of this section, let us derive several  corollaries and present new results.
\begin{corollary}\label{cor.1.9}
    Suppose that $V\cdot V\equiv_T V$.
    Then $U\cdot V\equiv_T U\times V$. 
    Moreover, if also    
    $U\cdot U\equiv_T U$, then $U\cdot V\equiv_T V\cdot U$.
\end{corollary}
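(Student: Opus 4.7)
The plan is to derive both statements as short consequences of Proposition \ref{Prop: equivalence with omega product}, which yields $U\cdot V\equiv_T U\times\prod_{x\in X} V$ for filters $U,V$ on countable sets $X,Y$, and, in particular, $V\cdot V\equiv_T \prod_{y\in Y} V$.

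For the first claim, I would combine the hypothesis $V\cdot V\equiv_T V$ with the ``in particular'' case of Milovich's proposition to obtain $\prod_{y\in Y} V\equiv_T V$. Substituting this into the full statement of the proposition gives
\[ U\cdot V\equiv_T U\times\prod_{y\in Y} V\equiv_T U\times V,\]
which is the desired conclusion. The step $U\times\prod_{y\in Y} V\equiv_T U\times V$ uses that Tukey equivalence is preserved under taking a Cartesian product with a fixed poset; this is a routine verification, since cofinal maps on the two factors combine into a cofinal map on the product.

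For the second claim, I would apply the first part twice in symmetric fashion. First, the hypothesis $V\cdot V\equiv_T V$ gives $U\cdot V\equiv_T U\times V$. Second, the additional hypothesis $U\cdot U\equiv_T U$ allows the first claim to be applied with the roles of $U$ and $V$ exchanged, yielding $V\cdot U\equiv_T V\times U$. Finally, the coordinate-swap is an order isomorphism between $U\times V$ and $V\times U$, so $U\times V\equiv_T V\times U$, and transitivity of $\equiv_T$ delivers $U\cdot V\equiv_T V\cdot U$.

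I do not expect a real obstacle here: the corollary is essentially a bookkeeping consequence of Milovich's reduction of Fubini products to Cartesian products. The only point needing any care is the preservation of Tukey equivalence under Cartesian products, which is immediate from lifting cofinal maps coordinate-wise. It is also worth noting that, by contrast with the measurable-cardinal setting discussed in the introduction, no appeal to well-foundedness of ultrapowers is made; both halves follow purely from the structural fact $U\cdot V\equiv_T U\times\prod V$.
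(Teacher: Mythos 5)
Your proof is correct and follows essentially the same route as the paper: both derive $V\equiv_T\prod_{n<\omega}V$ from the hypothesis via Milovich's proposition, substitute into $U\cdot V\equiv_T U\times\prod_{n<\omega}V$, and then obtain the symmetric statement from $U\times V\equiv_T V\times U$. No issues.
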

\begin{proof}
    Since $V\cdot V\equiv_T V$, we have that $V\equiv_T \prod_{n<\omega}V$, hence $$U\cdot V\equiv_T U\times \prod_{n<\omega} V\equiv_TU\times V.$$ For the second part, it is clear that $U\times V\equiv_T V\times U$ and therefore if $V\cdot V\equiv_T V$ and $U\cdot U\equiv_T U$, then $U\cdot V\equiv_T V\cdot U$.
\end{proof}

\begin{corollary}\label{cor.1.10}
    For any ultrafilters $U,V$ on countable sets, $U\cdot V\cdot V\equiv_T U\times (V\cdot V)$. In particular $(U\cdot U)\cdot (V\cdot V)\equiv_T(V\cdot V)\cdot (U\cdot U)$.
\end{corollary}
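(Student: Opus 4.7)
The plan is to chain together the two results of Milovich just recalled, namely the absorption principle $F\cdot G\cdot G\equiv_T F\cdot G$ (part (1) of his Theorem) and Proposition \ref{Prop: equivalence with omega product}, which identifies a Fubini product with a cartesian product indexed by $\omega$.

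First, by Milovich's absorption applied with $F=U$ and $G=V$, I have $U\cdot V\cdot V\equiv_T U\cdot V$. Proposition \ref{Prop: equivalence with omega product} then gives $U\cdot V\equiv_T U\times \prod_{n<\omega}V$, while the ``in particular'' clause of the same proposition, applied to $V$, gives $V\cdot V\equiv_T \prod_{n<\omega}V$. Chaining these three Tukey equivalences yields
\[
U\cdot V\cdot V \;\equiv_T\; U\cdot V \;\equiv_T\; U\times\prod_{n<\omega}V \;\equiv_T\; U\times (V\cdot V),
\]
which is the first statement.

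For the ``in particular'' clause, I would apply the statement just proved with the filter $U\cdot U$ in place of $U$; Fact \ref{fact: reduce to ultrafilters on omega} allows me to treat $U\cdot U$ as a filter on a countable set, so the hypotheses of both Milovich's absorption and Proposition \ref{Prop: equivalence with omega product} are satisfied. This gives $(U\cdot U)\cdot V\cdot V\equiv_T (U\cdot U)\times(V\cdot V)$. The Fubini product is associative up to the canonical bijection $(X\times Y)\times Z\to X\times(Y\times Z)$, which is a Rudin--Keisler isomorphism and hence a Tukey equivalence, so $(U\cdot U)\cdot V\cdot V\equiv_T (U\cdot U)\cdot(V\cdot V)$. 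Swapping the roles of $U$ and $V$ yields $(V\cdot V)\cdot(U\cdot U)\equiv_T (V\cdot V)\times(U\cdot U)$. Since the cartesian product with the coordinatewise order is trivially symmetric up to the coordinate-swap isomorphism, the two right-hand sides coincide in Tukey type, and the desired commutativity $(U\cdot U)\cdot(V\cdot V)\equiv_T(V\cdot V)\cdot(U\cdot U)$ follows.

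There is no real obstacle here; the argument is purely a manipulation of the two cited theorems together with associativity and symmetry of the two products up to natural isomorphism. The only place to be careful is in verifying that Milovich's absorption $F\cdot G\cdot G\equiv_T F\cdot G$ really does apply with $F$ taken to be a Fubini product like $U\cdot U$, but since Milovich's statement is for arbitrary filters, this causes no difficulty once one invokes Fact \ref{fact: reduce to ultrafilters on omega} to transfer to filters on $\omega$.
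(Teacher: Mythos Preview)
Your proof is correct. The paper leaves this corollary without an explicit proof; the intended route is to apply Corollary~\ref{cor.1.9} with $V\cdot V$ in place of $V$ (and $U\cdot U$ in place of $U$ for the second part), using Milovich's $F^{\otimes 2}\equiv_T F^{\otimes 3}$ to verify the hypothesis $(V\cdot V)\cdot(V\cdot V)\equiv_T V\cdot V$. Your argument simply unpacks that proof of Corollary~\ref{cor.1.9}: rather than first establishing $(V\cdot V)^2\equiv_T V\cdot V$ and then citing the corollary, you invoke Milovich's absorption $U\cdot V\cdot V\equiv_T U\cdot V$ directly and combine it with Proposition~\ref{Prop: equivalence with omega product}. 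The two routes are essentially the same reasoning, and yours is marginally more direct.
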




As a consequence of Proposition \ref{Prop: equivalence with omega product}, $U\cdot U\equiv_T U$ if and only if $U\equiv_T\prod_{n<\omega}U$. Still, checking whether $U$ is Tukey equivalent to $\prod_{n<\omega}U$ is usually a non-trivial task. We provide below a simpler condition and explain how it generalizes some  results from \cite{Dobrinen/Todorcevic11}.
\begin{definition}\label{def.Ipip}
    Let $U$ be an ultrafilter over a countable set $X$ and $I\subseteq U^*$ an ideal on $X$. We say that {\em $U$ has the  $I$-pseudo intersection property}
    (abbreviated by $I$-p.i.p.) if for any sequence $\l A_n\mid n<\omega\r\subseteq U$ there is a set $A\in U$ such that for every $n<\omega$, $A_n\subseteq^I A$, namely, $A_n\setminus A\in I$.
    \end{definition}
This definition is a generalization of being a $p$-point as the following example suggests.
\begin{example}
    $U$ having  the $\fin$-p.i.p.\  is equivalent to $U$ being a $p$-point.
\end{example}
\begin{claim}\label{Claim: U is *U*}
    For every ultrafilter $U$ over $X$, $U^*$-p.i.p.\ holds.
\end{claim}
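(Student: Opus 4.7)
The plan is to observe that the $U^*$-p.i.p.\ holds trivially for any ultrafilter $U$ on $X$, by exhibiting $X$ itself as a universal witness. Concretely, given any sequence $\langle A_n \mid n<\omega\rangle \subseteq U$, I would take $A=X$, which lies in $U$ since $U$ is a filter on $X$. Then for every $n<\omega$, $A_n \setminus A = A_n \setminus X = \emptyset$, and the empty set belongs to every ideal on $X$, so in particular $A_n \setminus A \in U^*$. That verifies $A_n \subseteq^{U^*} A$ for all $n$, as required by Definition~\ref{def.Ipip}. The only ingredients are $X \in U$ and $\emptyset \in U^*$; no combinatorial argument is needed.

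There is no obstacle here, and the claim really functions as a baseline calibration of the $I$-p.i.p.\ framework: because $I \subseteq U^*$ is required in the definition, and the larger the ideal the weaker the property, $I = U^*$ is the weakest (and, by this claim, always-satisfied) instance. The interesting content of the notion appears for proper subideals $I \subsetneq U^*$, where the property becomes nontrivial: $I = \fin$ recovers the $p$-point notion per the preceding example, and ideals such as $\fin^{\otimes\alpha}$ arising from the forcings $P(\omega^\alpha)/\fin^{\otimes\alpha}$ studied later in the paper give the abstraction its actual bite.
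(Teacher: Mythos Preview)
Your argument follows Definition~\ref{def.Ipip} exactly as printed, but that definition contains a typo: the direction of the almost-containment is reversed. A pseudo-intersection of $\langle A_n\rangle$ is a set $A$ that is almost contained in each $A_n$, so the intended requirement is $A\setminus A_n\in I$, not $A_n\setminus A\in I$. This is confirmed by the p-point example immediately preceding the claim, by the paper's proof of Proposition~\ref{Prop: equivalence with omega product} (actually the proposition right after this claim), where one reads ``choose $A\in U$ such that $A\setminus A_n\in I$, which exists by $I$-p.i.p.'', and by the paper's own proof of this claim, which takes $A=X$ and observes that $A\setminus A_n=X\setminus A_n\in U^*$ since $A_n\in U$.

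You should have caught this: your computation $A_n\setminus X=\emptyset\in I$ works for \emph{every} ideal $I$, not just $I=U^*$. That would make the $I$-p.i.p.\ vacuous for all $I$, directly contradicting the example that $\fin$-p.i.p.\ is equivalent to being a p-point, and also contradicting your own closing paragraph where you say the property becomes nontrivial for proper subideals. With the corrected definition the same witness $A=X$ still works, but now the reason is specific to $I=U^*$: the complement $X\setminus A_n$ lies in $U^*$ precisely because $A_n\in U$. So the overall shape of your proof---exhibit $X$ as a universal witness---matches the paper, but the set difference you verify is the wrong one.
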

\begin{proof}
    For any sequence $\l A_n\mid n<\omega\r$ we  can always take $A=X\in U$, since then $A\setminus A_n= X\setminus A_n\in U^*$ by definition.
\end{proof}
\begin{proposition}
    If $U$ has the $I$-p.i.p., then $$U\cdot U\leq_T U\times\prod_{n<\omega}I$$
\end{proposition}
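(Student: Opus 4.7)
The plan is to exhibit an explicit monotone cofinal map
$$f:U\times\prod_{n<\omega}I\longrightarrow U\cdot U.$$
By Facts \ref{fact: reduce to ultrafilters on omega} and \ref{fact: carinality of index set of product} I may assume $X=\omega$. Given $A\in U$ and $\langle I_n:n<\omega\rangle\in\prod_{n<\omega}I$, define
$$f(A,\langle I_n\rangle)\;:=\;\bigcup_{n\in A}\{n\}\times(A\setminus I_n).$$
This lands in $U\cdot U$: the first-coordinate projection equals $A\in U$, and for every $n\in A$ the fiber $A\setminus I_n=A\cap(\omega\setminus I_n)$ is the intersection of two $U$-sets, using that $\omega\setminus I_n\in U$ since $I_n\in I\subseteq U^{*}$.

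Monotonicity is immediate. If $(A,\langle I_n\rangle)\leq(A',\langle I'_n\rangle)$ in the product order, i.e.\ $A\supseteq A'$ and $I_n\subseteq I'_n$ for all $n$, then $A'\setminus I'_n\subseteq A\setminus I_n$ for every $n\in A'\subseteq A$, so $f(A',\langle I'_n\rangle)\subseteq f(A,\langle I_n\rangle)$; under the reverse-inclusion order on $U\cdot U$ this is the required $f(A,\langle I_n\rangle)\leq f(A',\langle I'_n\rangle)$.

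The key step is cofinality, and this is precisely where the $I$-p.i.p.\ is consumed. Given $B\in U\cdot U$, put $A_0=\{n:(B)_n\in U\}\in U$. Apply the $I$-p.i.p.\ to the family $\{A_0\}\cup\{(B)_n:n\in A_0\}\subseteq U$ to obtain $A\in U$ with $A\setminus A_0\in I$ and $A\setminus(B)_n\in I$ for every $n\in A_0$; replacing $A$ by $A\cap A_0\in U$, I may assume $A\subseteq A_0$. Set $I_n:=A\setminus(B)_n\in I$ for $n\in A$ and $I_n:=\emptyset$ otherwise. Then $A\setminus I_n=A\cap(B)_n\subseteq(B)_n$, hence
$$f(A,\langle I_n\rangle)=\bigcup_{n\in A}\{n\}\times\bigl(A\cap(B)_n\bigr)\;\subseteq\;B,$$
witnessing cofinality.

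The main conceptual subtlety is the choice of $f$. A naive first guess such as $\bigcup_{n\in A}\{n\}\times(\omega\setminus I_n)$ fails the cofinality test, because for a generic $B\in U\cdot U$ the vertical complements $\omega\setminus(B)_n$ lie only in $U^{*}$ and need not belong to $I$. Bundling the $I$-p.i.p.\ witness $A$ into each fiber — writing $A\setminus I_n$ rather than $\omega\setminus I_n$, so that the relevant complements are taken inside the ``window'' $A$ — is exactly what makes the $I$-p.i.p.\ strong enough to supply the required $I_n$'s.
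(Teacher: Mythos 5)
Your proof is correct, and it takes a somewhat different route from the paper's, though the combinatorial heart is the same. The paper first invokes Proposition \ref{Prop: equivalence with omega product} to replace $U\cdot U$ by $\prod_{n<\omega}U$, and then exhibits an \emph{unbounded} map $\langle A_n\rangle\mapsto\langle A,\langle A\setminus A_n\rangle\rangle$ from $\prod_{n<\omega}U$ into $U\times\prod_{n<\omega}I$, verifying Schmidt's characterization by pulling a bound $\langle A^*,\langle X_n^*\rangle\rangle$ back to the bound $\langle A^*\setminus X_n^*\rangle$. You instead work directly with $U\cdot U$ and build a \emph{monotone cofinal} map in the other direction, $f(A,\langle I_n\rangle)=\bigcup_{n\in A}\{n\}\times(A\setminus I_n)$, using the $I$-p.i.p.\ only in the cofinality check to produce the witness $A$ and the error sets $I_n=A\setminus(B)_n$. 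The underlying encoding of a Fubini-product set by ``one $U$-set plus a sequence of $I$-small errors'' is identical in both arguments; what your version buys is self-containment (no appeal to Milovich's Proposition \ref{Prop: equivalence with omega product}) and an explicit cofinal map, at the cost of having to verify by hand that $f$ lands in $U\cdot U$ (which you do correctly via $I\subseteq U^*$) and that $\{n\}\times(B)_n\subseteq B$. One small remark: the displayed definition of the $I$-p.i.p.\ in the paper literally reads $A_n\setminus A\in I$, but the intended (and everywhere used) condition is $A\setminus A_n\in I$, which is what you use; so your application of the hypothesis is the right one.
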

\begin{proof}
    Let $U$ be any ultrafilter. Then by Proposition \ref{Prop: equivalence with omega product}, $U\cdot U\equiv_T \prod_{n<\omega} U$. We claim that if $U$ has the $I$-p.i.p., then $\prod_{n<\omega}U\leq_T U\times \prod_{n<\omega} I$. Let $\l A_n\mid n<\omega\r\in\prod_{n<\omega}U$,  and choose $A\in U$ such that $A\setminus A_n\in I$, which exists by $I$-p.i.p. Define $F(\l A_n\mid n<\omega\r)=\l A,\l A\setminus A_n\mid n<\omega\r\r\in U\times \prod_{n<\omega}I$. We claim that $F$ is unbounded.  Indeed, suppose that $\mathcal{A}\subseteq \prod_{n<\omega}U$ and $F''\mathcal{A}$ is bounded by $\l A^*,\l X_n^*\mid n<\omega\r\r\in U\times\prod_{n<\omega}I$. Define $A^*_n=A^*\setminus X^*_n$. Note that  $I\subseteq U^*$ and $A^*\in U$ imply  that $\l A^*_n\mid n<\omega\r\in \prod_{n<\omega}U$. Let us show that this is a bound for $\mathcal{A}$. Let $\l A_n\mid n<\omega\r\in\mathcal{A}$. Then $F(\l A_n\mid n<\omega\r\in\mathcal{A})=\l A,\l A\setminus A_n\mid n<\omega\r\r\leq \l A^*,\l X^*_n\mid n<\omega\r\r$, namely, $A^*\subseteq A$ and $A\setminus A_n\subseteq X^*_n$. It follows that $A^*_n=A^*\setminus X^*_n\subseteq A\setminus (A\setminus A_n)=A\cap A_n\subseteq A_n$. Hence $\l A_n\mid n<\omega\r\leq \l A^*_n\mid n<\omega\r$, as desired.
\end{proof}
\begin{corollary}
    
    If $U$ is a $p$-point then $U\cdot U\leq_T U\times \prod_{n<\omega} \fin$.

\end{corollary}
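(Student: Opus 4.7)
The plan is to simply invoke the preceding proposition with $I=\fin$. By the example immediately preceding Claim \ref{Claim: U is *U*}, a $p$-point is exactly an ultrafilter $U$ with the $\fin$-p.i.p., so in particular any $p$-point ultrafilter $U$ on a countable set $X$ has the $\fin$-p.i.p.\ (and $\fin\subseteq U^*$ since $U$ is nonprincipal, so the hypothesis $I\subseteq U^*$ of the preceding proposition is satisfied).

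Then I would apply the previous proposition with $I=\fin$ directly: it yields
\[
U\cdot U\leq_T U\times\prod_{n<\omega}\fin,
\]
which is exactly the conclusion of the corollary. No further work is needed; this is a one-line specialization.

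There is no real obstacle here; the statement is a direct instantiation of the general sufficient condition, and the only thing to verify is that being a $p$-point coincides with the $\fin$-p.i.p., which is essentially the definition (a $p$-point is an ultrafilter in which any countable decreasing chain has a pseudo-intersection modulo $\fin$, and by closing under finite intersections this is equivalent to the stated p.i.p.\ formulation for arbitrary countable sequences in $U$).
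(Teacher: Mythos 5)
Your proof is correct and matches the paper's intent exactly: the corollary is stated without proof precisely because it is the immediate specialization $I=\fin$ of the preceding proposition, using the observation (recorded in the paper as an Example) that the $\fin$-p.i.p.\ is equivalent to being a $p$-point. Nothing further is needed.
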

\begin{fact}\label{fact.1.15}
    $(\fin,\subseteq)\equiv_T (\omega,\leq)$.
\end{fact}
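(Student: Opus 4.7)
The plan is to exhibit explicit monotone cofinal maps in both directions, since both posets are countable, directed, and without a maximum, which typically forces Tukey equivalence with $(\omega, \leq)$.

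For the direction $(\fin, \subseteq) \leq_T (\omega, \leq)$, I would define $f : \omega \to \fin$ by $f(n) = \{0, 1, \ldots, n-1\}$. This is clearly monotone. To check it is cofinal, let $B \subseteq \omega$ be cofinal (i.e., unbounded) and let $F \in \fin$. Setting $m = \max(F) + 1$, pick any $n \in B$ with $n \geq m$; then $F \subseteq f(n)$, so $f''B$ is cofinal in $(\fin, \subseteq)$.

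For the direction $(\omega, \leq) \leq_T (\fin, \subseteq)$, I would define $g : \fin \to \omega$ by $g(F) = \max(F) + 1$ (with $g(\emptyset) = 0$). This is monotone. To check cofinality, let $\mathcal{A} \subseteq \fin$ be cofinal and let $n \in \omega$. Since the finite set $\{0, 1, \ldots, n\}$ must be contained in some $A \in \mathcal{A}$, we get $g(A) \geq n + 1 > n$, so $g''\mathcal{A}$ is cofinal in $(\omega, \leq)$.

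There is no real obstacle here; the statement is essentially the observation that any countable directed poset without a maximum element is Tukey-equivalent to $\omega$, and $\fin$ is a particularly transparent instance because the sets $\{0, 1, \ldots, n-1\}$ already form a cofinal $\omega$-chain. The only thing to be careful about is matching the convention used earlier in the paper that $P \leq_T Q$ is witnessed by a cofinal map $f : Q \to P$, which the above two maps satisfy.
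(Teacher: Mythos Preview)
Your proof is correct and essentially the same as the paper's. The paper's proof is the one-liner that the sets $\{0,\dots,n\}$ form a cofinal subset of $\fin$ order-isomorphic to $\omega$; you have simply unpacked this into explicit monotone cofinal maps in both directions, with your $f$ having exactly that chain as its image.
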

\begin{proof}
    The collection of all sets of the form $\{0,...,n\}$ is cofinal in $\fin$ and is clearly order isomorphic to $\omega$.
\end{proof}
    Translating Fact  \ref{fact.1.15} to $\prod_{n<\omega}\fin$, we see that $\prod_{n<\omega}\fin\equiv_T(\omega^\omega,\leq)$,
where $\omega^\omega$ is the set of all functions $f:\omega\rightarrow\omega$ and the order $\leq$ refers to the everywhere domination order. We conclude that if $U$ is a $p$-point then $U\cdot U\leq_T U\times \omega^\omega$ (this is the important part of \cite[Thm. 33]{Dobrinen/Todorcevic11}). 
We can now derive the following sufficient condition:
\begin{corollary}\label{Cor: Sufficient condition for product}
    Let $U$ be an ultrafilter over a countable set $X$, $I\subseteq U^*$ an ideal on $X$ and \begin{enumerate}
        \item
        $U$ has the $I$-p.i.p., and 
        \item $\prod_{n<\omega}I\leq_T U$.
    \end{enumerate}
    Then $U\cdot U\equiv_TU$.
\end{corollary}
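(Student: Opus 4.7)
The plan is to chain together three Tukey reductions, with the preceding Proposition (the one just before Fact \ref{fact.1.15}) doing most of the work.

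First, I would recall that $U\le_T U\cdot U$ is immediate (as noted in the introduction, via the projection $U\cdot U\to U$, $A\mapsto \pi_1''A$, which is cofinal). So the entire content is to prove $U\cdot U\le_T U$.

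Next, I would assemble the upper bound. Hypothesis (1) and the previous proposition give $U\cdot U\le_T U\times \prod_{n<\omega}I$. Hypothesis (2) says $\prod_{n<\omega}I\le_T U$. It then suffices to show that $\le_T$ is monotone in each factor of a Cartesian product, so that $U\times\prod_{n<\omega}I\le_T U\times U$; this is most cleanly done using Schmidt's unbounded-map reformulation of $\le_T$. Indeed, given an unbounded $g\colon \prod_{n<\omega}I\to U$, the map $G\colon U\times\prod_{n<\omega}I\to U\times U$, $G(A,s)=(A,g(s))$, is unbounded: if $\mathcal{A}$ has no upper bound in $U\times\prod_{n<\omega}I$, then either its first projection is unbounded in $U$ (so $G''\mathcal{A}$ is unbounded in the first coordinate) or its second projection is unbounded in $\prod_{n<\omega}I$ (so $g$ carries it to an unbounded set in the second coordinate of $U\times U$).

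Finally, I would observe that $U\times U\equiv_T U$. One direction is trivial, and for $U\times U\le_T U$ the diagonal map $U\to U\times U$, $A\mapsto (A,A)$, is cofinal: if $\mathcal{C}\subseteq U$ is cofinal and $(B_1,B_2)\in U\times U$, then $B_1\cap B_2\in U$, so some $A\in\mathcal{C}$ with $A\subseteq B_1\cap B_2$ yields $(A,A)\ge (B_1,B_2)$. Stringing the inequalities together,
\[
U\;\le_T\; U\cdot U\;\le_T\; U\times\prod_{n<\omega}I\;\le_T\; U\times U\;\equiv_T\; U,
\]
which gives $U\cdot U\equiv_T U$. I do not anticipate a genuine obstacle here: the preceding proposition does the real work, and the only mildly delicate point is the monotonicity of $\le_T$ under $\times$, which is routine via Schmidt's reformulation.
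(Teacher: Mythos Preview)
Your proof is correct and follows exactly the same chain of inequalities as the paper's one-line proof, namely $U\le_T U\cdot U\le_T U\times\prod_{n<\omega}I\le_T U\times U\equiv_T U$. You simply spell out in more detail the two standard facts the paper leaves implicit (monotonicity of $\le_T$ under Cartesian products and $U\times U\equiv_T U$ for directed $U$).
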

    \begin{proof}
        $U\leq_T U\cdot U\leq_T U\times \prod_{n<\omega}I\leq_T U\times U\equiv_TU$.
    \end{proof}
The above sufficient condition is in fact an equivalence:
        \begin{theorem}\label{thm.main}
            For every ultrafilter $U$ over a countable set $X$, the following are equivalent:
            \begin{enumerate}
                \item $U\cdot U\equiv_TU$.
                \item $\prod_{n<\omega} U\equiv_T U$.
                \item There is an ideal $I$ such that $I$-p.i.p.\ holds and $\prod_{n<\omega}I\leq_T U$.
        \end{enumerate} 
        \end{theorem}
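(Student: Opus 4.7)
The plan is to organize the proof as a cycle of implications, since most of the work has already been done earlier in the section. Specifically, I would prove $(1)\Leftrightarrow (2)$, $(3)\Rightarrow (1)$, and $(2)\Rightarrow (3)$, with only the last implication requiring a new observation.

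The equivalence $(1)\Leftrightarrow (2)$ is immediate from Proposition \ref{Prop: equivalence with omega product}, which identifies $U\cdot U$ with $\prod_{n<\omega}U$ in the Tukey order (noting that $U\times\prod_{n<\omega}U\equiv_T \prod_{n<\omega}U$ since $U$ appears as a factor of the product). The implication $(3)\Rightarrow (1)$ is exactly Corollary \ref{Cor: Sufficient condition for product}.

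The only new content is $(2)\Rightarrow (3)$. The key idea is that the trivial choice $I:=U^*$ already satisfies both requirements. Since $U$ is an ultrafilter on $X$, the dual $U^*=P(X)\setminus U$ is an ideal contained in $U^*$, and by Claim \ref{Claim: U is *U*}, $U$ automatically has the $U^*$-p.i.p. It then remains to verify that $\prod_{n<\omega}U^*\leq_T U$. For this, I would use Fact \ref{fact: dual ideal tukey} to get $(U^*,\subseteq)\equiv_T (U,\supseteq)$, and then lift this Tukey equivalence to the countable everywhere-domination product, yielding $\prod_{n<\omega}U^*\equiv_T \prod_{n<\omega}U$. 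Hypothesis $(2)$ then gives $\prod_{n<\omega}U^*\equiv_T U$, and in particular $\prod_{n<\omega}U^*\leq_T U$.

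There is no real obstacle here; the proof is essentially an unpacking of what the sufficient condition of Corollary \ref{Cor: Sufficient condition for product} genuinely requires. The one step that deserves explicit justification is the passage from coordinate-wise Tukey equivalence to Tukey equivalence of the products, which I would handle in a single line: given cofinal maps $f\colon U^*\to U$ and $g\colon U\to U^*$, the induced maps $f^\omega$ and $g^\omega$ on products are again cofinal since cofinality in the everywhere-domination order is tested coordinate-wise.
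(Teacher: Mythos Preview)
Your proof is correct and follows the same route as the paper: $(1)\Leftrightarrow(2)$ via Proposition~\ref{Prop: equivalence with omega product}, $(3)\Rightarrow(1)$ via Corollary~\ref{Cor: Sufficient condition for product}, and $(2)\Rightarrow(3)$ by choosing $I=U^*$ and invoking Claim~\ref{Claim: U is *U*}. The paper simply declares $(2)\Rightarrow(3)$ ``trivial'' with this choice, whereas you spell out the verification that $\prod_{n<\omega}U^*\leq_T U$ via Fact~\ref{fact: dual ideal tukey} and the coordinate-wise lift to products; this extra care is fine and fills in exactly what the paper leaves implicit.
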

\begin{proof}
    $(1)$ and $(2)$ are equivalent by Proposition \ref{Prop: equivalence with omega product}.  $(2)\Rightarrow (3)$ is trivial, taking $I=U^*$ and by Claim \ref{Claim: U is *U*}. Finally, $(3)\Rightarrow(1)$ follows from the previous corollary. 
\end{proof}
In particular, if $U$ is a $p$-point, the above  proposition provides the equivalence that $U\cdot U\equiv_TU$ if and only if $U\geq_T \omega^\omega$, recovering   \cite[Thm. 35]{Dobrinen/Todorcevic11}.

Recall that an ultrafilter $U$ over $\omega$ is \textit{rapid}  if for every increasing function $f:\omega\rightarrow\omega$ there is $X\in U$ such that for every $n<\omega$, $otp(X\cap f(n))\leq n$.
        \begin{fact}
            $U$ is rapid if and only if the following map is cofinal: $F:U\rightarrow \omega^\omega$ defined by $F(X)=\l X(n)\mid n<\omega\r$, where $X(n)$ is the $n^{\text{th}}$ element of $X$.  
        \end{fact}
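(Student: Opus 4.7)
The plan is to unpack both conditions to the same pointwise-domination statement on $\omega$ and then observe that they match up to a routine trick for turning an arbitrary $g\in\omega^\omega$ into a strictly increasing majorant.

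First I would translate the order-type condition in the definition of rapid into a pointwise statement about the enumeration function. For $X\in U$ (hence infinite) and any strictly increasing $f:\omega\to\omega$, the condition $\operatorname{otp}(X\cap f(n))\leq n$ is equivalent to $X(n)\geq f(n)$: indeed, if $X(n)\geq f(n)$ then $X\cap f(n)\subseteq\{X(0),\dots,X(n-1)\}$, while conversely if $X(n)<f(n)$ then $\{X(0),\dots,X(n)\}\subseteq X\cap f(n)$, which has order type $\geq n+1$. Consequently, $U$ is rapid iff for every strictly increasing $f\in\omega^\omega$ there is $X\in U$ with $F(X)\geq f$ in the pointwise (everywhere-domination) order. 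On the other hand, by definition $F:U\to\omega^\omega$ is cofinal iff for every $g\in\omega^\omega$ there is $X\in U$ with $F(X)\geq g$ pointwise.

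The ($\Leftarrow$) direction is then immediate: cofinality applied to any increasing $f$ yields $X\in U$ with $X(n)=F(X)(n)\geq f(n)$ for all $n$, so $U$ is rapid. For ($\Rightarrow$), let $g\in\omega^\omega$ be arbitrary; define
\[
\tilde g(n):=n+\max_{k\leq n}g(k).
\]
Then $\tilde g$ is strictly increasing and $\tilde g(n)\geq g(n)$ for every $n$, so by rapidness there is $X\in U$ with $F(X)(n)\geq\tilde g(n)\geq g(n)$ for all $n$, witnessing cofinality of $F$ at $g$.

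The only (entirely routine) subtlety is the passage from an arbitrary $g$ to the strictly increasing $\tilde g$; there is no substantive obstacle.
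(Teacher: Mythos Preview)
Your argument is correct. The paper itself does not prove this statement; it is recorded as a \emph{Fact} without proof, so there is no paper proof to compare against.

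One very small remark: the paper's convention is that a map $f:Q\to P$ is \emph{cofinal} when it sends cofinal subsets of $Q$ to cofinal subsets of $P$, not merely when $f''Q$ is cofinal in $P$. You have literally verified only the latter. This is harmless here because $F$ is monotone (if $X\subseteq Y$ in $U$ then $X(n)\ge Y(n)$ for every $n$), and for monotone maps between directed posets the two notions coincide; but it would not hurt to say so explicitly.
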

        As a corollary, we obtain once more a result from \cite{Dobrinen/Todorcevic11}:
        \begin{corollary}
            If $U$ is a rapid $p$-point then $U\equiv_TU\cdot U$.
        \end{corollary}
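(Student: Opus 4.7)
The plan is to invoke Corollary \ref{Cor: Sufficient condition for product} (equivalently, the direction $(3)\Rightarrow(1)$ of Theorem \ref{thm.main}) with the choice $I=\fin$. This will reduce the claim to two already-established facts about $U$.

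First I would observe that since $U$ is a $p$-point, $U$ has the $\fin$-p.i.p.\ by the example following Definition \ref{def.Ipip}: given any sequence $\langle A_n\mid n<\omega\rangle\subseteq U$, a pseudo-intersection $A\in U$ with $A\setminus A_n$ finite for all $n$ exists by definition of $p$-point, and finiteness of $A\setminus A_n$ is exactly $A\setminus A_n\in\fin$. So condition $(1)$ of the corollary holds with $I=\fin$.

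Next I would check condition $(2)$, namely $\prod_{n<\omega}\fin\leq_T U$. By Fact \ref{fact.1.15} and the translation remarks afterwards, $\prod_{n<\omega}\fin\equiv_T(\omega^\omega,\leq)$ under everywhere domination, so it suffices to exhibit a cofinal map $U\to\omega^\omega$. This is precisely what rapidity supplies: the Fact preceding the corollary states that $U$ is rapid iff the enumeration map $F:U\to\omega^\omega$, $F(X)=\langle X(n)\mid n<\omega\rangle$, is cofinal. A cofinal map $U\to\omega^\omega$ witnesses $\omega^\omega\leq_T U$, hence $\prod_{n<\omega}\fin\leq_T U$.

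With both hypotheses verified, Corollary \ref{Cor: Sufficient condition for product} yields $U\cdot U\equiv_T U$. There is no serious obstacle in this argument; it is essentially a bookkeeping step that repackages the two well-known properties (being a $p$-point = $\fin$-p.i.p., and rapidity = domination of $\omega^\omega$ by $U$) into the abstract framework of Theorem \ref{thm.main}.
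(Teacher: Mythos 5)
Your proof is correct and is exactly the argument the paper intends: apply Corollary \ref{Cor: Sufficient condition for product} with $I=\fin$, using that the $p$-point property is the $\fin$-p.i.p.\ and that rapidity gives a cofinal map $U\to\omega^\omega\equiv_T\prod_{n<\omega}\fin$, hence $\prod_{n<\omega}\fin\leq_T U$. No issues.
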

       
        By taking ideals other than $\fin$, we will find ultrafilters that are not $p$-points but are Tukey equivalent  to their Fubini product.
        
        \section{The ideal $\fin\otimes \fin$}
        Let $I,J$ be ideals on $X,Y$ (resp.). We define the Fubini product of the ideals $I\otimes J$ over $X\times Y$: For $A\subseteq X\times Y$,
        $$A\in I\otimes J\text{ iff } \{x\in X\mid (A)_x\notin J\}\in I.$$
        We note that this is the dual operation of the Fubini product of filters:
        \begin{fact}
            For every two ideals $I,J$, $(I\otimes J)^*=I^*\cdot J^*$.
        \end{fact}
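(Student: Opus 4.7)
The plan is to prove the equality by direct unpacking of the definitions, using the elementary duality between an ideal and its dual filter applied at both ``levels'' of the Fubini construction. The core identity to exploit is that for any $B \subseteq X \times Y$ and any $x \in X$, one has the fiber identity
\[
\bigl((X\times Y)\setminus B\bigr)_x \;=\; Y\setminus (B)_x,
\]
which lets me translate a condition on complements in $Y$ into a condition on the set itself with respect to the dual filter in $Y$.

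Carrying this out, I would fix $B \subseteq X\times Y$ and chase the following equivalences. By definition of a dual filter, $B \in (I\otimes J)^*$ iff $(X\times Y)\setminus B \in I\otimes J$, which by the definition of $I\otimes J$ is equivalent to
\[
\{x \in X : Y\setminus (B)_x \notin J\} \in I.
\]
Next I apply duality at the ``$J$-level'': for any $C \subseteq Y$, one has $C \in J^*$ iff $Y\setminus C \in J$, so $Y\setminus (B)_x \notin J$ iff $(B)_x \notin J^*$. Hence the set displayed above equals $\{x \in X : (B)_x \notin J^*\}$. Finally I apply duality at the ``$I$-level'': this set lies in $I$ iff its complement, namely $\{x \in X : (B)_x \in J^*\}$, lies in $I^*$. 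Unwinding the definition of the Fubini product of filters, this last condition says exactly $B \in I^*\cdot J^*$.

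Thus the string of equivalences gives $B \in (I\otimes J)^*$ iff $B \in I^*\cdot J^*$, as required. There is no real obstacle here; the only thing to be careful about is keeping track of which complement is taken in which coordinate and ensuring that the passage from ``$\notin J$'' to ``$\in J^*$'' and from ``$\in I$'' to ``$\notin I^*$'' is applied on the correct side of the biconditional at each step.
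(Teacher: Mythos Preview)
Your proof is correct; the fiber identity $((X\times Y)\setminus B)_x = Y\setminus (B)_x$ together with the two applications of ideal/filter duality is exactly the routine verification. The paper in fact states this as a Fact without proof, so your definitional unpacking is the standard argument the authors are implicitly relying on.
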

        Our main interest in this section is the ideal $\fin\otimes \fin$ on $\omega\times\omega$, which  is defined by
            $$X\in \fin\otimes \fin\text{ iff }\{n<\omega\mid (X)_n\text{ is infinite}\}\text{ is finite.}$$
        \begin{proposition}\label{Prop: Tukey class of Fin2}
            $(\fin\otimes \fin,\subseteq)\equiv_T\omega^\omega$, where on $\omega^\omega$ we consider the everywhere domination order. 
        \end{proposition}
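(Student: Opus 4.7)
The plan is to verify both Tukey reductions $\fin\otimes\fin \leq_T \omega^\omega$ and $\omega^\omega \leq_T \fin\otimes\fin$ by constructing explicit maps witnessing Schmidt's criterion.

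For $\fin\otimes\fin \leq_T \omega^\omega$, I would exhibit a monotone cofinal map $\Phi : \omega \times \omega^\omega \to \fin\otimes\fin$ (with the product order on the domain), given by
$$\Phi(a, f) = \bigl([0,a] \times \omega\bigr) \cup \{(n, k) : n > a,\ k \leq f(n)\}.$$
Componentwise monotonicity is immediate. For cofinality, given $X \in \fin\otimes\fin$ let $I(X) = \{n : (X)_n \text{ is infinite}\}$ (finite by hypothesis), set $a = \max I(X)$, and set $f(n) = \max(X)_n$ for $n > a$ (which is well-defined since $(X)_n$ is finite for $n > a$); then $X \subseteq \Phi(a,f)$. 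This gives $\fin\otimes\fin \leq_T \omega \times \omega^\omega$, and the latter is Tukey equivalent to $\omega^\omega$ by the standard fact that $\omega \leq_T \omega^\omega$.

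For $\omega^\omega \leq_T \fin\otimes\fin$, I would define a cofinal map $\Psi : \fin\otimes\fin \to \omega^\omega$ by reindexing past the infinite columns. For each $X$, since $I(X)$ is finite, $\omega \setminus I(X)$ is cofinite and admits an increasing enumeration $\langle m^X_n : n < \omega \rangle$; define
$$\Psi(X)(n) = \max\bigl((X)_{m^X_n} \cup \{0\}\bigr),$$
well-defined in $\omega$ because each $(X)_{m^X_n}$ is finite. To check cofinality, let $B \subseteq \fin\otimes\fin$ be cofinal and let $h \in \omega^\omega$. Since the nondecreasing functions are cofinal in $(\omega^\omega,\leq)$ (replace $h$ by $\tilde h(n) = \max_{k\leq n} h(k)$), I may assume $h$ is nondecreasing. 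Consider $X' = \{(n,k) : k \leq h(n)\} \in \fin\otimes\fin$ and pick $X \in B$ with $X \supseteq X'$. Then $m^X_n \geq n$, and $(X)_{m^X_n} \supseteq (X')_{m^X_n} = [0, h(m^X_n)]$, whence $\Psi(X)(n) \geq h(m^X_n) \geq h(n)$ by monotonicity of $h$. Thus $\Psi''B$ is cofinal in $\omega^\omega$.

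The substantive issue, and the main obstacle, is the correct definition of $\Psi$. The naive attempt $X \mapsto (n \mapsto \max(X)_n)$ is not even defined on columns indexed by $I(X)$, and ad hoc extensions (e.g.\ sending such columns to $0$) fail to preserve cofinality because a column can switch from finite to infinite when $X$ is enlarged, making the assigned value drop rather than grow. Indexing by the increasing enumeration of $\omega \setminus I(X)$ resolves this: enlarging $X$ can only shrink the set of finite-column indices, which shifts $m^X_n$ to the right, and this rightward shift is exactly what is harmless once test functions are taken to be monotone.
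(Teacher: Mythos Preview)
Your argument is correct. Two very small presentational points: in the first direction you should say what $a$ and $f(n)$ for $n\le a$ are when $I(X)=\emptyset$ (e.g.\ set $a=0$ and $f(n)=0$ there), but this does not affect the proof.

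Your approach is genuinely different from the paper's. The paper does not build explicit maps: it passes to the dual filter via $(\fin\otimes\fin,\subseteq)\equiv_T((\fin\otimes\fin)^*,\supseteq)=(\fin^*\cdot\fin^*,\supseteq)$, then invokes Milovich's result (Proposition~\ref{Prop: equivalence with omega product}) that $U\cdot V\equiv_T U\times\prod_{n<\omega}V$ for filters, and finishes with $\fin^*\equiv_T\fin\equiv_T\omega$. The advantage of the paper's route is that it generalises immediately to $\fin^{\otimes\alpha}$ for all $\alpha<\omega_1$ (Section~3) by the same duality-plus-product trick, whereas your direct construction is tailored to the two-dimensional case and would need fresh bookkeeping at each level. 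The advantage of your route is that it is self-contained and does not rely on Proposition~\ref{Prop: equivalence with omega product}; your discussion of why the naive ``$n\mapsto\max(X)_n$'' map fails and how reindexing along $\omega\setminus I(X)$ repairs it is a nice explanation of the actual obstruction.
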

        \begin{proof}
           By Fact \ref{fact: dual ideal tukey} and the previous fact, $(\fin\otimes\fin,\subseteq)\equiv_T((\fin\otimes\fin)^*,\supseteq) \equiv_T(\fin^*\cdot\fin^*,\supseteq)$. By Proposition \ref{Prop: equivalence with omega product},
           $$(\fin^*\cdot\fin^*,\supseteq)\equiv_T\prod_{n<\omega}\fin^*\equiv_T\prod_{n<\omega}\fin\equiv_T\prod_{n<\omega}\omega=\omega^\omega.$$           \end{proof}
       \begin{corollary}
            Suppose that $U$ is an ultrafilter over $\omega\times\omega$ such that $\fin\otimes\fin\subseteq U^*$, $\fin\otimes\fin$-p.i.p.\ holds for $U$, and $\prod_{n<\omega}(\omega^\omega,\leq)\leq_T U$. Then $U\cdot U\equiv_TU$.
        \end{corollary}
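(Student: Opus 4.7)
The plan is to invoke Corollary \ref{Cor: Sufficient condition for product} directly, taking the ideal to be $I := \fin\otimes\fin$. Two of the three hypotheses of that corollary are already built into the assumption of the present statement: we are given $I \subseteq U^*$ and that $U$ satisfies the $I$-p.i.p. The only remaining condition to verify is $\prod_{n<\omega} I \leq_T U$, i.e., $\prod_{n<\omega}(\fin\otimes\fin) \leq_T U$.

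To check this, I would chain together the Tukey equivalences already established. Proposition \ref{Prop: Tukey class of Fin2} gives $(\fin\otimes\fin,\subseteq) \equiv_T (\omega^\omega,\leq)$. Taking the coordinatewise product of the cofinal maps witnessing this equivalence (which clearly produces cofinal maps between the countable products equipped with the everywhere domination order) yields
\[
\prod_{n<\omega}(\fin\otimes\fin) \equiv_T \prod_{n<\omega}(\omega^\omega,\leq).
\]
The right-hand side is $\leq_T U$ by the third hypothesis.

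Once this is in hand, the conclusion falls out from the telescoping chain
\[
U \leq_T U\cdot U \leq_T U \times \prod_{n<\omega}(\fin\otimes\fin) \equiv_T U \times \prod_{n<\omega}(\omega^\omega,\leq) \leq_T U \times U \equiv_T U,
\]
where the second inequality is the proposition preceding Corollary \ref{Cor: Sufficient condition for product}, and $U\times U\equiv_T U$ is standard. Thus $U\cdot U \equiv_T U$.

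There is no real obstacle: the content is entirely packaged in Corollary \ref{Cor: Sufficient condition for product} together with the Tukey computation of $\fin\otimes\fin$ from Proposition \ref{Prop: Tukey class of Fin2}. The only small item worth flagging in the writeup is the passage from a Tukey equivalence between posets to a Tukey equivalence of their countable products, which is routine from the definitions.
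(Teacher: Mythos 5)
Your proposal is correct and matches the paper's proof, which is exactly the one-line application of Corollary \ref{Cor: Sufficient condition for product} with $I=\fin\otimes\fin$; you have simply made explicit the (routine) translation of the hypothesis $\prod_{n<\omega}(\omega^\omega,\leq)\leq_T U$ into $\prod_{n<\omega}(\fin\otimes\fin)\leq_T U$ via Proposition \ref{Prop: Tukey class of Fin2}.
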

        \begin{proof}
            Apply Corollary \ref{Cor: Sufficient condition for product} for $I=\fin\otimes\fin$.
        \end{proof}
        The order $\prod_{n<\omega}\omega^\omega$ can be simplified:
        \begin{fact}\label{fact: taking power of omega}
        $\prod_{n<\omega}\omega^\omega$ is order isomorphic to $\omega^\omega$ and in particular $\prod_{n<\omega}\omega^\omega\equiv_T\omega^\omega$.
        \end{fact}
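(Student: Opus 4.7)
The plan is to establish the order isomorphism directly by unpacking the definition of the everywhere domination order on the product. An element of $\prod_{n<\omega}\omega^\omega$ is a sequence $\langle f_n \mid n<\omega\rangle$ where each $f_n \in \omega^\omega$, and the order is $\langle f_n\rangle \leq \langle g_n\rangle$ iff $f_n(m) \leq g_n(m)$ for all $n,m<\omega$. Thus $\prod_{n<\omega}\omega^\omega$ is canonically order isomorphic to the poset of functions $h:\omega\times\omega\to\omega$ ordered by pointwise everywhere domination, via the map $\langle f_n\rangle \mapsto h$ where $h(n,m) = f_n(m)$.

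Next I would invoke a standard transport principle: if $\phi:A\to B$ is a bijection of sets and $(P,\preceq)$ is any partial order, then the posets $P^A$ and $P^B$ under pointwise everywhere domination are order isomorphic via $h\mapsto h\circ\phi^{-1}$ (this is precisely Fact \ref{fact: carinality of index set of product}, applied with $P=\omega$). Taking $A=\omega\times\omega$ and $B=\omega$, which are in bijection, yields that the poset of functions $\omega\times\omega\to\omega$ under everywhere domination is order isomorphic to $\omega^\omega$ under everywhere domination.

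Composing the two isomorphisms gives an order isomorphism between $\prod_{n<\omega}\omega^\omega$ and $\omega^\omega$. Since any order isomorphism is in particular a cofinal map in both directions (indeed its inverse is monotone and cofinal), we conclude $\prod_{n<\omega}\omega^\omega\equiv_T\omega^\omega$. There is no real obstacle here — the only thing to be careful about is that the order on $\prod_{n<\omega}\omega^\omega$ is the everywhere domination order induced from the everywhere domination order on each factor $\omega^\omega$, which is exactly the pointwise order on $\omega^{\omega\times\omega}$, so the two unpackings agree without any subtlety.
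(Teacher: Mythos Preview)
Your proof is correct and essentially the same as the paper's. The paper partitions $\omega$ into infinitely many infinite pieces $A_n$ and sends $f\in\omega^\omega$ to the sequence $\langle f\restriction A_n\rangle_n\in\prod_n\omega^{A_n}\cong\prod_n\omega^\omega$; this is just your currying/uncurrying argument rephrased, since choosing such a partition is the same data as choosing a bijection $\omega\leftrightarrow\omega\times\omega$.
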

        \begin{proof}
            Take any partition of $\omega$ into infinitely many infinite sets $\l A_n\mid n<\omega\r$. Then any function $f:\omega\rightarrow\omega$ induces functions $\l f\restriction A_n\mid n<\omega\r\in \prod_{n<\omega}\omega^{A_n}$. Clearly, $\omega^{A_n}$ is isomorphic to $\omega^\omega$ by composing each function $f:A_n\rightarrow \omega$ with the inverse of the transitive collapse $\pi_n:A_n\rightarrow \omega$.
        \end{proof}
        We now look for conditions that  guarantee that $U\geq_T \omega^\omega$. One way, is to ensure that the Rudin-Keisler projection on the first coordinate is rapid:
        \begin{definition}
            Suppose that $U$ is an ultrafilter such that $\fin\otimes\fin\subseteq U^*$.  We say that $U$ is {\em $2$-rapid} if the ultrafilter $\pi_*(U)=\{X\subseteq \omega\mid \pi^{-1}X\in U\}$
            is a rapid ultrafilter on $\omega$, where $\pi:\omega\times\omega\rightarrow \omega$ is the projection to the first coordinate.
            
        \end{definition}
        \begin{corollary}\label{Corollary: sufficient for Fin2}
            Suppose that $U$ is an ultrafilter on $\omega\times\omega$ such that $\fin\otimes\fin\subseteq U^*$, and $U$ is 
            $\fin\otimes\fin$-p.i.p.\ and $2$-rapid. 
            Then $U\cdot U\equiv_T U$.
        \end{corollary}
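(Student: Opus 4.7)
The plan is to apply Corollary \ref{Cor: Sufficient condition for product} with $I = \fin\otimes\fin$. Hypothesis (1) of that corollary, namely that $I$-p.i.p.\ holds for $U$, is granted by assumption (together with the explicit $\fin\otimes\fin\subseteq U^*$), so the only thing left to verify is hypothesis (2):
$$\prod_{n<\omega}(\fin\otimes\fin)\leq_T U.$$

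I would first simplify the left-hand side. By Proposition \ref{Prop: Tukey class of Fin2} one has $(\fin\otimes\fin,\subseteq)\equiv_T\omega^\omega$, and consequently
$$\prod_{n<\omega}(\fin\otimes\fin)\;\equiv_T\;\prod_{n<\omega}\omega^\omega\;\equiv_T\;\omega^\omega,$$
the last step being Fact \ref{fact: taking power of omega}. The task thus reduces to establishing $\omega^\omega\leq_T U$.

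This is precisely where the $2$-rapidity hypothesis enters. By definition, $\pi_*(U)$ is a rapid ultrafilter on $\omega$, so by the Fact characterising rapidity just above, the enumeration map $X\mapsto \l X(n)\mid n<\omega\r$ is a cofinal map $\pi_*(U)\to\omega^\omega$, whence $\omega^\omega\leq_T\pi_*(U)$. On the other hand, the standard Rudin-Keisler projection $g: U\to\pi_*(U)$ defined by $g(A)=\pi''A$ is monotone and cofinal: given $B\in\pi_*(U)$, the set $\pi^{-1}B$ lies in $U$ and $g(\pi^{-1}B)=\pi''\pi^{-1}B\subseteq B$, so arbitrarily small images in $\pi_*(U)$ are hit. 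Hence $\pi_*(U)\leq_T U$, and chaining gives $\omega^\omega\leq_T U$, as required.

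The argument is essentially an assembly of Proposition \ref{Prop: Tukey class of Fin2}, Fact \ref{fact: taking power of omega}, the rapidity Fact, and Corollary \ref{Cor: Sufficient condition for product}, so no single step is a real obstacle. The one conceptual point worth highlighting is that $2$-rapidity is tailored precisely to deliver the critical lower bound $\omega^\omega\leq_T U$ through the Rudin-Keisler projection to the first coordinate; this is what makes the hypothesis exactly strong enough for the sufficient condition to apply.
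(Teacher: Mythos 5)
Your proposal is correct and follows essentially the same route as the paper: reduce to Corollary \ref{Cor: Sufficient condition for product} with $I=\fin\otimes\fin$, identify $\prod_{n<\omega}(\fin\otimes\fin)$ with $\omega^\omega$ via Proposition \ref{Prop: Tukey class of Fin2} and Fact \ref{fact: taking power of omega}, and use $2$-rapidity to get $\omega^\omega\leq_T\pi_*(U)\leq_{RK}U$, hence $\omega^\omega\leq_T U$. This is exactly the argument the paper gives (explicitly for the general $\alpha$-rapid case in Corollary \ref{corollary: sufficient for FINalpha}).
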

        Given an ideal $I$ on a set $X$, an {\em $I$-positive} set is any set in $I^+:=P(X)\setminus I$. 
        The forcing $P(X)/I$ is forcing equivalent to $(I^+, \sse^{I})$, where  the pre-order is given by $X\sse^I Y$ iff $X\setminus Y\in I$.
        If $G\subseteq P(X)$ is $I^+$-generic over $V$, then $G$ is an ultrafilter for the algebra $P^V(X)$ (namely $(V,\in ,G)\models ``G$ is an ultrafilter") 
        and also $I\subseteq G^*$. 
        We will only be interested in the case where $X$ is a countable set and $P(X)/I$ is $\sigma$-closed. This is equivalent to the following property of $I$: we say that $I$ is a \textit{$p^+$-ideal} 
        if whenever $\l A_n\mid n<\omega\r$ is a $\subseteq$-decreasing sequence of $I$-positive sets, there is an $A\in I^+$ such that for every $n<\omega$, $A\setminus A_n\in I$. 

        Given that $P(X)/I$ is $\sigma$-closed, the forcing does not add new reals. Hence, if $G$ is $I^+$-generic over $V$ then $P^V(X)=P^{V[G]}(X)$  and thus, $G$ is an ultrafilter 
         over $X$ in $V[G]$.
         Clearly,  $\fin$ is a $p^+$-ideal,  and it is well known that the generic ultrafilter for $P(\omega)/\fin$ is selective (and therefore a $p$-point and rapid):

        \begin{fact}[Folklore]
        \label{fact: generic for P(omega)/FIN is selective}
            If $G$ is $P(\omega)/\fin$-generic over $V$,  then
             $G$ is a Ramsey ultrafilter in $V[G]$.
        \end{fact}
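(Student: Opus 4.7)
The plan is to reduce the Ramsey property of $G$ in $V[G]$ to a density argument in $V$, exploiting the $\sigma$-closure of $P(\omega)/\fin$. Since $P(\omega)/\fin$ is $\sigma$-closed, it adds no new reals, so every coloring $c:[\omega]^2\to 2$ that lives in $V[G]$ already lies in $V$. Thus it suffices to verify the Ramsey property in $V$: for each $c:[\omega]^2\to 2$ in $V$, the set
\[
D_c=\{A\in[\omega]^\omega : c\text{ is constant on }[A]^2\}
\]
should meet $G$.

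First I would verify that $D_c$ is dense in the forcing $P(\omega)/\fin\cong([\omega]^\omega,\subseteq^*)$. Given any $B\in[\omega]^\omega$, the infinite Ramsey theorem applied to $c\restriction [B]^2$ produces an infinite $H\subseteq B$ homogeneous for $c$; then $H\in D_c$ and $H\subseteq B$, so $H\leq B$ in the forcing order, establishing density.

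Next, since $G$ is $V$-generic, $G\cap D_c\neq \emptyset$ for each $c:[\omega]^2\to 2$ in $V$. Given any coloring $c:[\omega]^2\to 2$ in $V[G]$, the $\sigma$-closure of $P(\omega)/\fin$ forces $c\in V$, so there is $A\in G$ on which $c$ is constant. Together with the fact (already noted in the excerpt preceding the statement) that $G$ is an ultrafilter on $\omega$ in $V[G]$, this shows $G$ is a Ramsey ultrafilter in $V[G]$.

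There is no real obstacle here; the only subtle point is invoking no-new-reals to ensure that \emph{every} coloring in the generic extension was already available in the ground model, since otherwise the density argument would not apply to colorings introduced by the forcing.
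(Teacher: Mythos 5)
Your proof is correct. The paper states this fact as folklore and supplies no proof of its own, so there is nothing to compare against; your argument --- $\sigma$-closure gives no new reals, hence every coloring $c:[\omega]^2\to 2$ in $V[G]$ lies in $V$, Ramsey's theorem makes the homogeneous sets dense in $([\omega]^\omega,\subseteq^*)$, and genericity does the rest --- is exactly the standard one the authors are implicitly invoking.
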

        In particular, $G$ is a rapid $p$-point.
        By results in \cite{Dobrinen/Todorcevic11}, $G\cdot G\equiv_T G<\mathcal{U}_{\text{top}}$, and in fact,  by results in \cite{Raghavan/Todorcevic12}, $G$ is Tukey-minimal among nonprincipal ultrafilters.
        However, this does not answer Question \ref{Question} as $G$ is a $p$-point and therefore basically generated.
         We will answer Question \ref{Question} below.

        Next, let us move to the forcing $P(\omega\times\omega)/\fin\otimes\fin$. This forcing was first considered in \cite{Szymanski/Zhou} and later in many papers including
        \cite{HrusaVerner}, \cite{Blass/Dobrinen/Raghavan15}, \cite{DobrinenJSL15}, \cite{DobrinenHathaway20}, and \cite{BST22}. Again,  it is not hard to see that $\fin\otimes\fin$ is a $p^+$-ideal. The following properties of the generic ultrafilter are due to Blass, Dobrinen and Raghavan \cite{Blass/Dobrinen/Raghavan15} and  Dobrinen \cite{DobrinenJSL15}:\begin{theorem}\label{theorem: the ultrafilter obtained by P(omega)/FIN2}
            Let $G$ be a $P(\omega\times\omega)/\fin\otimes\fin$-generic ultrafilter over $V$. Then:
        \begin{enumerate}
             
            \item $G$ is not Tukey top and is also not basically generated.
            \item $\pi_*(G)$ is $P(\omega)/\fin$-generic over $V$, where $\pi:\omega\times\omega\rightarrow\omega$ is the projection to the first coordinate. 
            \item 
           $G$ is the immediate successor of $\pi_*(G)$ in the Tukey order.
        \end{enumerate}
        \end{theorem}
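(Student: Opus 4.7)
My plan is to handle the three parts in order of increasing difficulty, leaning on the combinatorics of $\fin\otimes\fin$-positive sets and a canonization argument for monotone cofinal maps.

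For part (2), I would show that for every dense open $D\subseteq P(\omega)/\fin$ lying in $V$, the pullback $\pi^{-1}[D]:=\{A\in(\fin\otimes\fin)^+:\pi''A\in D\}$ is dense in $P(\omega\times\omega)/\fin\otimes\fin$. Given a positive condition $A$, let $B=\{n<\omega:(A)_n\text{ is infinite}\}$. By definition of $\fin\otimes\fin$-positivity, $B$ is infinite, so density of $D$ yields $Y\in D$ with $Y\subseteq^* B$. Then $A':=A\cap(Y\times\omega)$ refines $A$ in the forcing order (sections of $A\setminus A'$ are infinite only for $n\in B\setminus Y$, a finite set), and $\pi''A'\subseteq Y$, so by downward closure $\pi''A'\in D$. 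Applying genericity of $G$ to $\pi^{-1}[D]$ produces $A\in G$ with $\pi''A\in\pi_*(G)\cap D$, proving the claim. By Fact \ref{fact: generic for P(omega)/FIN is selective}, $\pi_*(G)$ is therefore Ramsey in $V[G]$, hence in particular a rapid $p$-point.

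For part (1), I would pass to a convenient cofinal subfamily of $G$. Concretely, the ``tree-like'' conditions of the form $T=\bigcup_{n\in X}\{n\}\times Z_n$, with $X\in\pi_*(G)$ and each $Z_n$ an infinite subset of $\omega$, are cofinal in $G$ (this uses $\sigma$-closure and the fact that each condition in $G$ can be pruned column-by-column to a set of this shape while remaining in $G$). Non-Tukey-topness follows because these witnesses allow one to define a monotone cofinal map from $G$ into the product $\pi_*(G)\times\prod_{n<\omega}\mathrm{Ram}_n$, where each $\mathrm{Ram}_n$ is a filter of cofinal type strictly below the Tukey-top; the product is not Tukey-top, hence neither is $G$. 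For the failure of basic generation, the obstruction is the collapsing behaviour of tree-like conditions: one can exhibit a convergent sequence $\langle T_k\rangle_{k<\omega}$ from any candidate cofinal basis $\mathcal{B}\subseteq G$ whose pointwise limit $T^*$ has only finitely many infinite columns, so $T^*\notin G$. No infinite subsequence's intersection with any element of $\mathcal{B}$ can lie in $G$, violating the basic generation axiom. This line of argument follows \cite{Blass/Dobrinen/Raghavan15}.

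For part (3), the heart is a canonization theorem for monotone cofinal maps $f:G\to V$ where $V$ is a nonprincipal ultrafilter with $V\leq_T G$. Restricting $f$ to the cofinal family of tree-like conditions above and applying a Pudl\'{a}k--R\"{o}dl-type canonical Ramsey theorem on fronts on $\omega\times\omega$ (as developed for this forcing in \cite{DobrinenJSL15}), one can thin out to a cofinal subfamily on which $f$ is equivalent either to the identity, to $\pi$, or to a constant projection. This yields the trichotomy $V\equiv_T G$, $V\equiv_T\pi_*(G)$, or $V$ principal. Combined with $\pi_*(G)\leq_T G$ (via $A\mapsto \pi''A$) and strictness $\pi_*(G)<_T G$ (since $G$ fails to be a $p$-point, being above $\fin\otimes\fin$, while $\pi_*(G)$ is Ramsey), this gives that $G$ is the immediate Tukey successor of $\pi_*(G)$.

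The main obstacle is the canonization step in part (3): one must carefully adapt the Ramsey analysis from \cite{DobrinenJSL15} so that an arbitrary monotone cofinal map is reduced modulo a cofinal refinement to one of only finitely many canonical forms. Once that technical theorem is in place, each clause of the statement falls out.
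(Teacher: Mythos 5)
The paper does not actually prove this theorem; its ``proof'' is a list of references ((1) to \cite[Thms.\ 47 and 60]{Blass/Dobrinen/Raghavan15}, (2) to \cite[Prop.\ 30]{Blass/Dobrinen/Raghavan15}, (3) to \cite[Thm.\ 6.2]{DobrinenJSL15}). Your part (2) is the one place where you supply a genuine, self-contained argument, and it is correct: given positive $A$, the set $B$ of indices with infinite section is infinite, $Y\in D$ with $Y\subseteq^* B$ gives $A'=A\cap(Y\times\omega)$, which is still $\fin\otimes\fin$-positive (cofinitely many $n\in Y$ lie in $B$) and satisfies $A'\subseteq A$, hence is a stronger condition, with $\pi''A'$ an infinite subset of $Y$ and therefore in $D$ by openness. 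Your parenthetical justification that ``sections of $A\setminus A'$ are infinite only for $n\in B\setminus Y$, a finite set'' is both wrong ($Y\subseteq^* B$ bounds $Y\setminus B$, not $B\setminus Y$) and unnecessary, since $A'\subseteq A$ already gives $A'\leq A$.

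Parts (1) and (3) have genuine gaps if read as proofs rather than as pointers to the cited papers. For non-Tukey-topness you map $G$ into $\pi_*(G)\times\prod_{n<\omega}\mathrm{Ram}_n$ without defining the $\mathrm{Ram}_n$ (the column filters generated by $\{(A)_n: A\in G\}$ are not ultrafilters, let alone Ramsey), and you assert without justification that the countable product is not Tukey-top; that assertion is not a general fact about products and is essentially where all the work lies --- the argument in \cite{Blass/Dobrinen/Raghavan15} is a substantial forcing/partition-relation analysis, not a product bound. For the failure of basic generation, your proposed witness is a sequence from $\mathcal{B}$ whose limit $T^*$ lies \emph{outside} $G$; but basic generation only constrains sequences in $\mathcal{B}$ that converge \emph{to a member of} $\mathcal{B}$, so a sequence converging to a set not in $G$ refutes nothing. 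For (3), you correctly identify the canonization of monotone cofinal maps over a Ramsey-theoretically tame cofinal family as the right strategy --- this is exactly what \cite{DobrinenJSL15} does via the space $\mathcal{E}_2$ --- but the claimed trichotomy \emph{is} the theorem; none of its content is supplied here. Finally, your one-line strictness argument ($\pi_*(G)<_T G$ because $G$ is not a p-point while $\pi_*(G)$ is Ramsey) is incomplete, since p-pointness is not Tukey-invariant; one needs the Raghavan--Todorcevic result that anything Tukey below a Ramsey ultrafilter is Rudin--Keisler below it (so Tukey equivalence with $\pi_*(G)$ would force $G$ to be isomorphic to a p-point).
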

         \begin{proof}
         For the convenience of the reader, we provide references to the proofs of the above:
             \begin{enumerate}
                 \item \cite[Thms. 47 and 60]{Blass/Dobrinen/Raghavan15}.
                 \item \cite[Prop. 30]{Blass/Dobrinen/Raghavan15} \item \cite[Thm. 6.2] {DobrinenJSL15}.
             \end{enumerate}
         \end{proof}

         Together with  Theorem \ref{theorem: the ultrafilter obtained by P(omega)/FIN2}, the following theorem provides an answer to Question \ref{Question}:
        \begin{theorem}
        Let $G$ be  a $P(\omega\times\omega)/\fin\otimes\fin$-generic ultrafilter over $V$. Then
            \begin{enumerate}
                \item $\fin\otimes\fin\subseteq G^*$.
                \item $G$ satisfies $\fin\otimes\fin$-p.i.p.
                \item $G$ is $2$-rapid.
                \item $G\cdot G\equiv_TG$.
            \end{enumerate}
        \end{theorem}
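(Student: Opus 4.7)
The plan is to verify items (1), (2), and (3) in order, after which (4) follows immediately from Corollary \ref{Corollary: sufficient for Fin2}. The substantive work is to check that $\fin\otimes\fin$ is disjoint from $G$, that $G$ is closed under $\fin\otimes\fin$-pseudo-intersections, and that $\pi_*(G)$ is a rapid ultrafilter on $\omega$.

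For (1), I would use the standard density argument: given any $A \in \fin\otimes\fin$ and any condition $q \in (\fin\otimes\fin)^+$, the set $q \setminus A$ is again $\fin\otimes\fin$-positive, since otherwise $q = (q \setminus A) \cup (q \cap A)$ would itself lie in the ideal. Hence the conditions disjoint from $A$ are dense, and by genericity $G$ contains such a condition; since $G$ is an ultrafilter this forces $A \in G^{*}$.

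For (2), the key input is that $\fin\otimes\fin$ is a $\sigma$-ideal, so the forcing is $\sigma$-closed and in particular adds no new reals. Any countable sequence $\l A_n \mid n<\omega\r \sse G$ in $V[G]$ therefore lies in $V$. Working in $V$, the collection
\[
D = \{B \in (\fin\otimes\fin)^+ : B \sse^{\fin\otimes\fin} A_n \text{ for every } n\}
\]
is dense below every $C \in G$: the $\sse^{\fin\otimes\fin}$-decreasing sequence $\l C \cap A_0 \cap \cdots \cap A_n \mid n<\omega\r$ consists of elements of $G$, hence of positive sets, and the $\sigma$-ideal property delivers a lower bound in $(\fin\otimes\fin)^+$, which lies in $D$ and refines $C$. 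Genericity then yields some $A \in G \cap D$, the desired $\fin\otimes\fin$-pseudo-intersection.

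For (3), I combine Theorem \ref{theorem: the ultrafilter obtained by P(omega)/FIN2}(2), which asserts that $\pi_*(G)$ is $P(\omega)/\fin$-generic over $V$, with Fact \ref{fact: generic for P(omega)/FIN is selective}, which says every such generic is Ramsey and hence rapid; this is precisely $2$-rapidity. Then (4) follows from Corollary \ref{Corollary: sufficient for Fin2}. I do not foresee a serious obstacle; the most delicate point is the absoluteness observation in (2) that the sequence $\l A_n \r$ already sits in $V$, which is what lets the density argument be run in the ground model.
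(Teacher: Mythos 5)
Your proof is correct and follows essentially the same route as the paper: (1) is the routine density/ultrafilter observation (the paper simply calls it trivial), (3) is obtained exactly as in the paper from the genericity of $\pi_*(G)$ together with Fact \ref{fact: generic for P(omega)/FIN is selective}, and (4) is the quoted corollary. The only divergence is in (2): the paper argues via the forcing relation (take $X\in G$ forcing $\forall n\,(X_n\in\dot{G})$ and note that if $X\setminus X_n$ were positive it would be a stronger condition forcing $X_n\notin\dot{G}$), while you argue via density; both hinge on the same absoluteness point you correctly isolate, namely that $\sigma$-closure puts the sequence $\l A_n\mid n<\omega\r$ into $V$. One imprecision in your density argument should be repaired: your set $D$ is \emph{not} dense below every $C\in G$. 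For instance, if $C'=C\setminus A_0$ happens to be positive, then $C'\le C$, but no positive $B\le C'$ can lie in $D$, since then both $B\cap A_0$ and $B\setminus A_0$ would belong to $\fin\otimes\fin$, making $B$ non-positive. What your diagonalization actually establishes is only that every $C\in G$ has \emph{some} extension in $D$. That weaker statement still suffices, but via a slightly different standard fact: for any $D\in V$, either $G\cap D\neq\emptyset$ or some $p\in G$ has no extension in $D$ at all (because $D\cup\{p:\forall q\le p,\ q\notin D\}$ is dense and lies in $V$). Equivalently, replace $D$ by the genuinely dense set $D\cup\{B:\exists n\ (B\cap A_n\in\fin\otimes\fin)\}$ and observe that $G$, being a filter containing every $A_n$ and disjoint from the ideal by item (1), cannot meet the second piece. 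With that one-line repair, your argument is complete.
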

        \begin{proof}
              $(1)$ is trivial. To see $(2)$, the argument is the same as showing that the forcing is $\sigma$-closed. For the self-inclusion of this paper, let us provide an indirect proof assuming $\sigma$-closure. Let $\l X_n\mid n<\omega\r\subseteq G$. Since the forcing is $\sigma$-closed, $\l X_n\mid n<\omega\r\in V$. Let $X\in G$ be such that $X\Vdash \forall n, X_n\in \dot{G}$, then $X\leq X_n$. Otherwise, $X\setminus  X_n\in (\fin\otimes\fin)^+$ and then $(X\setminus X_n)\leq X$ is a stronger condition which forces that $X_n\notin \dot{G}$, contradiction. Hence for every $n<\omega$, $X\setminus X_n\in \fin\otimes\fin$. 

             For $(3)$, we apply the previous theorem clause $(4)$ to see that  $\pi_*(G)$ is generic for $P(\omega)/\fin$ and therefore rapid by Fact \ref{fact: generic for P(omega)/FIN is selective}. Finally, $(4)$ follows from $(1)-(3)$ and Corollary \ref{Corollary: sufficient for Fin2}.

        \end{proof}

        \section{Transfinite iterates of $\fin$}

In this section we obtain analoguous results to the ones from the previous section, but for ultrafilters with higher cofinal-type complexity. To do so, we will consider the  generic ultrafilters  $G_{\al}$ obtained by the  forcing $P(\omega^\alpha)/\fin^{\otimes\alpha}$, where $1\le \al<\om_1$
(see the paragraph following Theorem \ref{Theorem: Natasha's result JSL}). Such ultrafilters were investigated in 
 \cite{DobrinenJSL15} and in yet unpublished work \cite{DobrinenInPrep}. 
 We point out that for $2\le \al$, $G_{\al}$ is not a p-point and not basically generated; and for $\beta<\al$, there are natural Rudin-Keisler projections from $G_{\al}$ to $G_{\beta}$.
 
 \begin{theorem}[Dobrinen]\label{Theorem: Natasha's result JSL}
     Suppose that $1\le \al<\om_1$ and  $G_{\al}$ be a generic ultrafilter obtained by  forcing  with $P(\omega^{\al})/\fin^{\otimes\alpha}$ over $V$.
Then $G_{\al}$ is not Tukey top and also not basically generated.  Moreover,
\begin{enumerate}
\item  For each $1\le k<\om$, 
the collection of Tukey types of ultrafilters Tukey-reducible to $G_k$ forms a chain of length $k$ consisting exactly of Tukey types of $G_n$ for $1\le n\le k$. \cite[Thm.\ 6.2]{DobrinenJSL15}
\item 
For each $\om<\al<\om_1$,
the Tukey types of the $G_{\beta}$, $1\le \beta\le\al$ are all distinct  and form a chain, but there are actually $2^{\om}$ many Tukey types below $G_{\al}$.
\cite{DobrinenInPrep}
\end{enumerate}
 \end{theorem}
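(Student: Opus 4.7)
The plan is to prove the theorem in four stages: (a) verify that $\fin^{\otimes\al}$ is a $\sigma$-ideal so that $G_\al$ exists; (b) establish that $G_\al$ is not Tukey top and not basically generated; (c) exhibit the chain of Rudin--Keisler projections $G_\al\to G_\beta$ and show strictness; (d) analyze all Tukey-reducible ultrafilters for the two parts.

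For (a) and the projections, I would verify by recursion on $\al<\om_1$ that $\fin^{\otimes\al}$ is a $\sigma$-ideal: at successor stages a diagonal argument using the Fubini structure gives $\sigma$-closure, and at limit stages one fixes a cofinal $\om$-sequence $\l\al_n\mid n<\om\r$ in $\al$ and diagonalizes across the levels. This guarantees $G_\al$ is an ultrafilter on $P^V(\om^\al)$. The natural projection $\pi_{\al,\beta}:\om^\al\to\om^\beta$, obtained by forgetting the last $\al\setminus\beta$ coordinates under the iterated lexicographic identification, sends $\fin^{\otimes\al}$ into $\fin^{\otimes\beta}$, and a standard density argument shows $\pi_{\al,\beta\,*}(G_\al)$ is $P(\om^\beta)/\fin^{\otimes\beta}$-generic, giving $G_\beta\leq_{RK}G_\al$ and hence $G_\beta\leq_T G_\al$. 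To see that $G_\al$ is not Tukey top, observe that $G_\al\leq_T\prod_{n<\om}\fin^{\otimes\al}$, which by iterating Proposition \ref{Prop: Tukey class of Fin2} is Tukey-equivalent to $\om^\om$, strictly below Tukey top. That $G_\al$ is not basically generated (for $\al\ge 2$) extends the Blass--Dobrinen--Raghavan argument for $G_2$: any purported basic cofinal system would clash with the diagonal descent through the iterated Fubini structure at some level $\beta\leq\al$.

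For strictness $G_\beta<_T G_\al$ when $\beta<\al$, I would argue by genericity: any hypothetical monotone cofinal $f:G_\beta\to G_\al$ living in $V[G_\al]$ must have a name, and a density argument in $P(\om^\al)/\fin^{\otimes\al}$ produces a condition deciding too much about $f$, contradicting the strictly greater complexity of the $\al$-th iterate. For part (1), I would show by induction on $k<\om$ that every nonprincipal $V\leq_T G_k$ is Tukey equivalent to some $G_n$ with $1\leq n\leq k$. The base case $k=1$ uses the Raghavan--Todorcevic result that $G_1$ is Tukey-minimal among nonprincipal ultrafilters on $\om$. In the inductive step, given $V\leq_T G_{k+1}$ via a monotone cofinal $g:G_{k+1}\to V$, a canonization argument exploiting the Ramsey structure of $\fin^{\otimes(k+1)}$ forces $g$ to agree on an $I$-positive condition with a map factoring through the projection $\pi_{k+1,n}$ for some $n\leq k+1$; this pins $V\equiv_T G_n$.

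For part (2), I would construct $2^\om$ Tukey-distinct reducts of $G_\al$ when $\om<\al<\om_1$ by using the freedom at limit stages: for an almost disjoint family $\{X_\xi\mid \xi<2^\om\}$ of cofinal $\om$-sequences in a fixed limit ordinal $\leq\al$, each $X_\xi$ determines a distinct ``skeletal'' Rudin--Keisler quotient of $G_\al$, and a Tukey-distinguishing invariant (say, the cofinal type of the induced projected ideal) separates them. The main obstacle throughout is the combinatorial canonization of monotone cofinal maps out of $G_\al$: in part (1) to rule out extra Tukey types appearing between consecutive $G_n$'s, and in part (2) to verify Tukey inequivalence of the $2^\om$ reducts. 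This is where the deep Ramsey analysis of the ideals $\fin^{\otimes\al}$ developed in \cite{DobrinenJSL15,DobrinenInPrep} is essential.
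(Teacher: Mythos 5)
This theorem is not proved in the paper at all: it is a quoted result of Dobrinen, with part (1) cited to \cite[Thm.\ 6.2]{DobrinenJSL15} and part (2) to the unpublished \cite{DobrinenInPrep} (the non-Tukey-top and non-basically-generated assertions for $\al=2$ come from \cite{Blass/Dobrinen/Raghavan15}). So any comparison is between your sketch and those external arguments, and on that score your proposal has a concrete error and otherwise defers the hard content back to the very papers being cited.

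The error is in stage (b): you claim $G_\al\leq_T\prod_{n<\om}\fin^{\otimes\al}\equiv_T\om^\om$ and conclude from this that $G_\al$ is not Tukey top. The inequality $G_\al\leq_T\om^\om$ is false for \emph{any} nonprincipal ultrafilter, not just $G_\al$: the poset $(\om^\om,\leq)$ is countably directed (pointwise suprema), and if $D\leq_T E$ with $E$ countably directed then $D$ is countably directed (push countably many elements of $D$ up through a monotone cofinal map $E\to D$); an ultrafilter on a countable set that is countably directed under $\supseteq$ would be countably complete, hence principal. What is true is that the \emph{ideal} $\fin^{\otimes\al}$ is Tukey equivalent to $\om^\om$ (the paper's Proposition~\ref{Prop: Tukey class of Fin2} and its transfinite extension), but the generic ultrafilter extending the dual filter is a far larger object --- indeed the paper proves $\om^\om\leq_T G_\al$ via $\al$-rapidity, so the reduction goes the other way. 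The actual proof that $G_\al$ is not Tukey top requires the Ramsey-theoretic analysis of fronts and monotone cofinal maps on these forcings; it cannot be read off from the cofinal type of the ideal.

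The remaining stages have the right architecture (the $\sigma$-ideal recursion, the Rudin--Keisler projections $\pi_{\al,\beta}$, genericity of the projected filters), but the substance of parts (1) and (2) --- canonization of monotone cofinal maps to rule out intermediate Tukey types, strictness $G_\beta<_T G_\al$, and the $2^\om$ types below $G_\al$ for $\al>\om$ --- is exactly what you describe as ``where the deep Ramsey analysis \ldots is essential.'' Acknowledging that the key lemma lives in \cite{DobrinenJSL15} and \cite{DobrinenInPrep} is appropriate (it is what the paper itself does), but it means your text is a citation with a faulty lemma attached rather than an independent proof.
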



The following recursive definition of $\fin^{\otimes\al}$, for $2\le\al<\om_1$,  is well-known and has appeared in 
\cite{BST22},
\cite{DobrinenJSL15},
    \cite{DobrinenJML16},  and \cite{Kurilic15}. 
        \begin{enumerate}
            \item At successor steps, $\fin^{\otimes\alpha+1}=\fin\otimes\fin^{\otimes\alpha}$ is the ideal on $\omega^{\alpha+1}=\omega\times \omega^\alpha$; explicitly, 
$A\sse \om^{\al+1}$ is in $\fin^{\otimes\al+1}$  iff for all but finitely many $n$, $(A)_n\in\fin^{\otimes\al}$.
            \item For limit $\al<\omega_1$ we fix an increasing  sequence $\l \al_n\mid n<\omega\r$
            with $
            \sup_{n<\om}\al_n=\al$ and define $\fin^{\otimes\al}=\sum_{\fin}\fin^{\otimes\al_n}$ on $\omega^{\al}:=\biguplus_{n<\omega}\{n\}\times \omega^{\al_n}$; explicitly, 
         $A\sse \om^{\al}$ is in $\fin^{\otimes\al}$ iff for all but finitely many $n$, $(A)_n$ is in $\fin^{\otimes \al_n}$.

        \end{enumerate}

       The Rudin-Keisler order is defined as follows: Let $I,J$ be ideals on $X,Y$ respectively. We say that  $I\leq_{RK} J$ if there is a function $f:Y\rightarrow X$ such that $f_*(J)=I$, where $$f_*(J)=\{A\subseteq X\mid \pi^{-1}[X]\in J\}.$$
       It is well known that the Rudin-Keisler order implies the Tukey order.
        \begin{lemma}[Folklore]
  For  $1\le \beta\leq \al<\om_1$, we have 
      $(\fin^{\otimes\beta},\sse)\leq_{RK} (\fin^ {\otimes\al},\sse)$ and therefore $(\fin^{\otimes\beta},\sse)\leq_{T} (\fin^ {\otimes\al},\sse)$.
        \end{lemma}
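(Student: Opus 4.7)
The plan is to prove the Rudin--Keisler reduction by transfinite induction on $\alpha$, constructing for each pair $1 \leq \beta \leq \alpha$ an explicit map $f_{\alpha,\beta}\colon \omega^\alpha \to \omega^\beta$ satisfying $(f_{\alpha,\beta})_*(\fin^{\otimes\alpha}) = \fin^{\otimes\beta}$, i.e.,
\[
B \in \fin^{\otimes\beta} \iff f_{\alpha,\beta}^{-1}[B] \in \fin^{\otimes\alpha}.
\]
The $\leq_T$ statement then falls out for free, since this equivalence immediately makes $B \mapsto f_{\alpha,\beta}^{-1}[B]$ a monotone cofinal map from $(\fin^{\otimes\beta},\subseteq)$ into $(\fin^{\otimes\alpha},\subseteq)$.

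For $\beta = \alpha$ the identity works. At a successor stage $\alpha = \gamma + 1$ with $\beta \leq \gamma$, I would first establish the one-step reduction $\fin^{\otimes\gamma} \leq_{RK} \fin^{\otimes\gamma+1}$ by taking the second-coordinate projection $g\colon \omega \times \omega^\gamma \to \omega^\gamma$, $g(n,x) = x$. Since $g^{-1}[B] = \omega \times B$ has every vertical section equal to $B$, we get $g^{-1}[B] \in \fin^{\otimes\gamma+1}$ iff $B \in \fin^{\otimes\gamma}$; composing with $f_{\gamma,\beta}$ supplied by the induction hypothesis yields $f_{\alpha,\beta}$, using the transitivity of $\leq_{RK}$.

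At a limit stage, with $\omega^\alpha = \biguplus_{n<\omega} \{n\} \times \omega^{\alpha_n}$ and $\alpha = \sup_n \alpha_n$, fix $\beta < \alpha$ and choose $n_0$ so that $\beta \leq \alpha_n$ for all $n \geq n_0$. Define $f_{\alpha,\beta}(n,x) := f_{\alpha_n, \beta}(x)$ when $n \geq n_0$ (invoking the induction hypothesis on $\alpha_n$), and send the finitely many initial blocks to any fixed point of $\omega^\beta$. Then $(f_{\alpha,\beta}^{-1}[B])_n = f_{\alpha_n,\beta}^{-1}[B]$ for $n \geq n_0$, which by the induction hypothesis lies in $\fin^{\otimes\alpha_n}$ iff $B \in \fin^{\otimes\beta}$; since membership in $\fin^{\otimes\alpha}$ depends only on cofinitely many vertical sections, this gives the required equivalence.

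There is no real obstacle in the argument --- it is pure bookkeeping --- but two points deserve attention. First, the limit step uses the ``all but finitely many'' clause in the definition of $\fin^{\otimes\alpha}$ in an essential way, since that is exactly what lets us ignore the arbitrary choice on blocks $n < n_0$. Second, one should spell out the passage $\leq_{RK} \Rightarrow \leq_T$ at the level of ideals-under-$\subseteq$ rather than quoting it for filters/ultrafilters: given $f_*(J)=I$, the map $A \mapsto f^{-1}[A]$ is monotone and sends cofinal subsets of $I$ to cofinal subsets of $J$, which is all that is needed.
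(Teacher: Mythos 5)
Your construction of the Rudin--Keisler reduction is correct and essentially identical to the paper's proof: the second-coordinate projection at successor stages and the blockwise diagonal map at limit stages (with an arbitrary constant value on the finitely many blocks below $n_0$) are exactly what the paper does, and your observation that the ``all but finitely many sections'' clause absorbs the arbitrary choice on the initial blocks is precisely the point that makes the limit step work.

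The one place the write-up goes wrong is the final paragraph, where you pass from $\leq_{RK}$ to $\leq_T$. The map $B\mapsto f^{-1}[B]$ from $(\fin^{\otimes\beta},\sse)$ to $(\fin^{\otimes\alpha},\sse)$ is \emph{not} cofinal: already at the first successor step, $A=\{0\}\times\omega^{\gamma}$ lies in $\fin^{\otimes\gamma+1}$, but $A\sse g^{-1}[B]=\omega\times B$ forces $B=\omega^{\gamma}\notin\fin^{\otimes\gamma}$, so nothing in the range of the map dominates $A$. (Moreover, even if it were cofinal, a cofinal map \emph{from} $\fin^{\otimes\beta}$ \emph{into} $\fin^{\otimes\alpha}$ would witness the reduction in the opposite direction, $\fin^{\otimes\alpha}\leq_T\fin^{\otimes\beta}$.) The map you chose is nonetheless the right witness; the property to verify is that it is an \emph{unbounded} (Tukey) map in Schmidt's sense: if $\mathcal{B}\sse\fin^{\otimes\beta}$ and $\{f^{-1}[B]:B\in\mathcal{B}\}$ is bounded in $(\fin^{\otimes\alpha},\sse)$, then $f^{-1}[\bigcup\mathcal{B}]=\bigcup_{B\in\mathcal{B}}f^{-1}[B]$ lies in $\fin^{\otimes\alpha}$, so $\bigcup\mathcal{B}\in\fin^{\otimes\beta}$ because $f_*(\fin^{\otimes\alpha})=\fin^{\otimes\beta}$, and hence $\mathcal{B}$ is bounded. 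Alternatively, the genuinely monotone cofinal map $(\fin^{\otimes\alpha},\sse)\to(\fin^{\otimes\beta},\sse)$ is $C\mapsto\{x: f^{-1}[\{x\}]\sse C\}$, or one can pass to the dual filters using the fact that an ideal under $\sse$ is Tukey equivalent to its dual filter under $\supseteq$ and quote the standard implication for filters. Any of these one-line repairs completes your argument.
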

        \begin{proof}
             By induction on $\al$. At successor steps, we define the projection to the second coordinate, Rudin-Keisler projects $I\otimes J$ onto $J$ and therefore $\fin^{\otimes\al+1}$ onto $\fin^{\otimes\al}$. 
             For limit $\al$,
             suppose that  for every $m\leq n<\omega$, $\pi_{n,m}:\omega^{\alpha_n}\rightarrow\omega^{\alpha_m}$ is a Rudin-Keisler projection of $\fin^{\alpha_n}$ onto $\fin^{\alpha_m}$. Fix any $N<\omega$ Let us define $f:\omega^\alpha\rightarrow\omega^{\alpha_N}$ by applying 
             $$f(\l k,x\r)=\begin{cases}
                 f_{k,N}(x) & k\geq N\\
                 a^* &k<N
             \end{cases}$$
             where $a^*$ is any fixed element of $\omega^{\alpha_N}$.
             Now  if $Y\subseteq \omega^{\alpha_N}$,  then $f^{-1}[Y]=\cup_{n\geq N}\{n\}\times f^{-1}_{n,N}[Y]$ and $f^{-1}_{n,N}[Y]$. If $Y\in \fin^{\alpha_N}$ then $f^{-1}_{n,N}[Y]\in \fin^{\times\alpha_n}$ and therefore $f^{-1}[Y]\in \fin^{\otimes\alpha}$. If $Y\notin \fin^{\otimes\alpha_N}$, then $f^{-1}_{n,N}[Y]\notin \fin^{\otimes\alpha_n}$ and therefore $f^{-1}[Y]\notin \fin^{\alpha}$.             
Since $\leq_{RK}$ is transitive, we conclude thee lemma.
\end{proof}

        There is a simple characterization of the Tukey type of $\fin^{\otimes\al}$ given in the following theorem:
        \begin{theorem}
            For every $1<\al<\omega_1$, 
$(\fin^{\otimes\al},\subseteq)\equiv_{T}\omega^{\omega}.$
            
        \end{theorem}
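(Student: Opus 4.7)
The plan is a transfinite induction on $\al$, with base case $\al=2$ supplied by Proposition \ref{Prop: Tukey class of Fin2}. A useful unification of the two inductive steps is that both clauses of the recursive definition of $\fin^{\otimes\al}$ present $\om^\al$ as $\biguplus_{n<\om}\{n\}\times\om^{\al_n}$, where $A\in\fin^{\otimes\al}$ iff $(A)_n\in\fin^{\otimes\al_n}$ for cofinitely many $n$; here $\l\al_n\mid n<\om\r$ is constantly the predecessor of $\al$ in the successor case and is the chosen cofinal sequence in the limit case. By truncating if necessary, I may assume each $\al_n\ge 2$, so $\fin^{\otimes\al_n}\equiv_T\om^\om$ holds by induction.

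For the lower bound $\om^\om\le_T\fin^{\otimes\al}$, I would invoke the preceding lemma: for any fixed $\beta$ with $2\le\beta<\al$, $\fin^{\otimes\beta}\le_{RK}\fin^{\otimes\al}$, so by induction $\om^\om\equiv_T\fin^{\otimes\beta}\le_T\fin^{\otimes\al}$. For the upper bound, I would fix monotone cofinal maps $\phi_n:\om^\om\to\fin^{\otimes\al_n}$ from the inductive hypothesis and define
\[
\Phi:\om\times\prod_{n<\om}\om^\om\to\fin^{\otimes\al},\qquad \Phi(N,\l g_n\mid n<\om\r)=\bigcup_{n<N}\{n\}\times\om^{\al_n}\;\cup\;\bigcup_{n\ge N}\{n\}\times\phi_n(g_n).
\]
The image lies in $\fin^{\otimes\al}$ because only the first $N$ fibers are unconstrained; monotonicity in $(N,\l g_n\mid n<\om\r)$ follows from monotonicity of each $\phi_n$; and cofinality is witnessed for $A\in\fin^{\otimes\al}$ by choosing $N$ with $(A)_n\in\fin^{\otimes\al_n}$ for all $n\ge N$ and then using cofinality of each $\phi_n$ to pick $g_n$ with $\phi_n(g_n)\supseteq (A)_n$. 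Combining this with $\om\times\prod_{n<\om}\om^\om\equiv_T\om\times\om^\om\equiv_T\om^\om$ from Fact \ref{fact: taking power of omega} then yields $\fin^{\otimes\al}\le_T\om^\om$.

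The only real subtlety I anticipate is ensuring that the inductive hypothesis can be upgraded to supply \emph{monotone} cofinal maps $\phi_n$ --- a standard fact for Tukey reductions between the directed posets considered here, but one that should be noted explicitly. Beyond that, the argument is a unified bookkeeping over the successor and limit cases and draws only on tools already present in the proof of Proposition \ref{Prop: Tukey class of Fin2} together with Facts \ref{fact.1.15} and \ref{fact: taking power of omega}.
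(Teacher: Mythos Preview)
Your proposal is correct and follows essentially the same route as the paper: induction on $\al$ with base case $\al=2$, upper bound via the product $\om\times\prod_{n<\om}\fin^{\otimes\al_n}$ together with the induction hypothesis and Fact~\ref{fact: taking power of omega}, and lower bound via the Rudin--Keisler lemma giving $\fin^{\otimes 2}\le_T\fin^{\otimes\al}$. The only cosmetic difference is that the paper handles successor and limit separately---invoking Proposition~\ref{Prop: equivalence with omega product} directly at successors and Theorem~\ref{Theorem: basic bound for product} at limits---whereas you unify the two cases and write out the cofinal map $\Phi$ explicitly; this is exactly the content of Theorem~\ref{Theorem: basic bound for product} composed with the maps $\phi_n$, so nothing new is happening.
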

        \begin{proof}
            By induction on $\al$. For $\al=2$, this is Proposition \ref{Prop: equivalence with omega product}. For successor $\al$, by Proposition \ref{Prop: equivalence with omega product},
            $$\fin^{\otimes\al+1}=\fin\otimes\fin^{\otimes\al}\equiv_T\fin\times \prod_{n<\omega}\fin^{\otimes\al}.$$
            By the induction hypothesis, $\fin^{\otimes\al}\equiv_T\omega^\omega$, and $\fin\equiv_T\omega$. Therefore, by Fact \ref{fact: taking power of omega},
            $$\fin\times \prod_{n<\omega} \fin^{\otimes\alpha}\equiv_T\omega\times\prod_{n<\omega}\omega^\omega\equiv_T\omega\times \omega^\omega\equiv_T\omega^\omega.$$
            So we conclude that $\fin^{\otimes\alpha+1}\equiv_T\omega^\omega$.
            For limit $\al$, we have by Theorem \ref{Theorem: basic bound for product} that
            $$\fin^{\otimes\al}=\sum_{\fin}\fin^{\otimes\al_n}\leq_T\omega\times \prod_{n<\omega}\omega^\omega\equiv_T\omega^\omega.$$
            For the other direction, we have by the previous lemma that $\omega^\omega\equiv_T\fin^{\otimes 2}\leq_T \fin^{\otimes\al}$, as desired.
        \end{proof}

        \begin{definition}
            We say that an ultrafilter $U$ on $\omega^\alpha$ is {\em $\alpha$-rapid} if $\pi_*(U)$ is rapid. where $\pi$ is the projection to the first coordinate.
        \end{definition}
        It is clear that if $U$ is $\alpha$-rapid, then $\omega^\omega\leq_T \pi_*(U)\leq_{RK}U$; hence we have the following:
        \begin{corollary}\label{corollary: sufficient for FINalpha}
            Suppose that $U$ is an $\alpha$-rapid ultrafilter over $\omega^\alpha$ such that $\fin^{\otimes\alpha}\subseteq U^*$ and $\fin^{\otimes\alpha}$-p.i.p.\ holds.
            Then $U\cdot U\equiv_T U$. 
        \end{corollary}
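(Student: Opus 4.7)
The plan is to reduce directly to Corollary \ref{Cor: Sufficient condition for product} applied with the ideal $I = \fin^{\otimes\alpha}$. Two of the three hypotheses of that corollary are given to us: the containment $\fin^{\otimes\alpha}\subseteq U^*$ and the $\fin^{\otimes\alpha}$-p.i.p. The only nontrivial ingredient is to verify that $\prod_{n<\omega}\fin^{\otimes\alpha}\leq_T U$.

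First I would combine the preceding theorem, which gives $(\fin^{\otimes\alpha},\subseteq)\equiv_T \omega^\omega$, with Fact \ref{fact: taking power of omega} to obtain
\[
\prod_{n<\omega}\fin^{\otimes\alpha}\equiv_T \prod_{n<\omega}\omega^\omega\equiv_T \omega^\omega.
\]
So it suffices to show $\omega^\omega\leq_T U$. Here I would invoke the $\alpha$-rapidity hypothesis: by the Fact preceding Corollary~1.19, rapidity of $\pi_*(U)$ is equivalent to the map $X\mapsto\langle X(n)\mid n<\omega\rangle$ being a cofinal map from $\pi_*(U)$ into $\omega^\omega$, so $\omega^\omega\leq_T \pi_*(U)$. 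Since $\pi_*(U)\leq_{RK} U$ implies $\pi_*(U)\leq_T U$, we conclude $\omega^\omega\leq_T U$, which gives the required bound $\prod_{n<\omega}\fin^{\otimes\alpha}\leq_T U$.

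With both hypotheses of Corollary \ref{Cor: Sufficient condition for product} verified for $I=\fin^{\otimes\alpha}$, we conclude $U\cdot U\equiv_T U$. There is no real obstacle here; the corollary is essentially a bookkeeping assembly of the preceding theorem characterizing the Tukey type of $\fin^{\otimes\alpha}$, the stability of $\omega^\omega$ under countable products, and the standard fact that rapidity yields Tukey dominance over $\omega^\omega$.
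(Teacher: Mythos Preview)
Your proposal is correct and follows essentially the same route as the paper's own proof: reduce to Corollary~\ref{Cor: Sufficient condition for product} with $I=\fin^{\otimes\alpha}$, use the preceding theorem together with Fact~\ref{fact: taking power of omega} to get $\prod_{n<\omega}\fin^{\otimes\alpha}\equiv_T\omega^\omega$, and then invoke $\alpha$-rapidity to obtain $\omega^\omega\leq_T\pi_*(U)\leq_{RK} U$ and hence $\prod_{n<\omega}\fin^{\otimes\alpha}\leq_T U$.
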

        \begin{proof}
By Corollary \ref{Cor: Sufficient condition for product} for $I=\fin^{\otimes\alpha}$, it remains to verify that $\prod_{n<\omega}\fin^{\otimes\alpha}\leq_T U$. Indeed, $\prod_{n<\omega}\fin^{\otimes\alpha}\equiv_T\prod_{n<\omega}\omega^\omega$ and therefore by Fact \ref{fact: taking power of omega}, $\prod_{n<\omega}\fin^{\otimes\alpha}\equiv_T\omega^\omega$. Since $U$ is $\alpha$-rapid, $\omega^\omega\leq_T\pi_*(U)\leq_{RK} U$ and therefore $\prod_{n<\omega}\fin^{\otimes\alpha}\leq_T U$. It follows that $U\cdot U\equiv_T U$. 
        \end{proof}


The following fact that 
each
$\fin^{\otimes\al}$
is a $p^+$-ideal
is well-known
(see \cite{BST22}, \cite{DobrinenJSL15}, \cite{DobrinenJML16}, \cite{Kurilic15}), and included here for self-containment.

        \begin{proposition}
            Suppose that $\l A_i\mid i<\omega\r$ is a decreasing sequence of sets in $(\fin^{\otimes\alpha})^+$.
            Then there is an $A\in (\fin^{\otimes\alpha})^+$ such that for every $i<\omega$, $A\setminus A_i\in \fin^{\otimes\alpha}$.
        \end{proposition}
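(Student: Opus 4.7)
The plan is to argue by transfinite induction on $\alpha$, using at each step a uniform diagonalization that exploits the $\subseteq$-decreasing nature of $\langle A_i\rangle$. The key observation is that since $A_{i+1}\subseteq A_i$, every vertical section $(A_i)_n$ is also decreasing in $i$, so the required witness $A$ can be built by picking, along a diagonal sequence of first coordinates $n_0<n_1<\cdots$, the single section $(A_k)_{n_k}$ at $n=n_k$; no further refinement of these sections will be necessary. The base case $\alpha=1$ is the classical $\sigma$-completeness of $\fin$: inductively choose $a_i\in A_i$ with $a_0<a_1<\cdots$ and take $A=\{a_i:i<\omega\}$, so that $A\setminus A_i\subseteq\{a_0,\ldots,a_{i-1}\}$ is finite.

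For the successor step $\alpha+1$, set $B_i=\{n<\omega:(A_i)_n\notin\fin^{\otimes\alpha}\}$. Each $B_i$ is infinite (by positivity of $A_i$) and the $B_i$'s are $\subseteq$-decreasing. Apply the base case to select a strictly increasing $\langle n_k:k<\omega\rangle$ with $n_k\in B_k$ for each $k$, and define
\[
A=\bigcup_{k<\omega}\{n_k\}\times (A_k)_{n_k}.
\]
Since $(A)_{n_k}=(A_k)_{n_k}\in(\fin^{\otimes\alpha})^+$ for the infinitely many $n_k$, we have $A\in(\fin^{\otimes\alpha+1})^+$. Fixing $i<\omega$, for every $k\ge i$ monotonicity gives $(A_k)_{n_k}\subseteq (A_i)_{n_k}$, so $(A\setminus A_i)_{n_k}=\emptyset$; and for $n\notin\{n_k:k<\omega\}$ we have $(A)_n=\emptyset$ automatically. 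Hence $\{n:(A\setminus A_i)_n\ne\emptyset\}\subseteq\{n_0,\ldots,n_{i-1}\}$ is finite, giving $A\setminus A_i\in\fin^{\otimes\alpha+1}$.

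The limit step is essentially the same construction applied at the outermost coordinate of the disjoint sum $\omega^{\alpha}=\biguplus_n\{n\}\times\omega^{\alpha_n}$: let $B_i=\{n<\omega:(A_i)_n\notin\fin^{\otimes\alpha_n}\}$, which is again infinite and $\subseteq$-decreasing, choose $n_0<n_1<\cdots$ with $n_k\in B_k$, and set $A=\bigcup_k\{n_k\}\times(A_k)_{n_k}$. The verifications that $A\in(\fin^{\otimes\alpha})^+$ and $A\setminus A_i\in\fin^{\otimes\alpha}$ for every $i$ are verbatim the same: positivity from $(A)_{n_k}=(A_k)_{n_k}\notin\fin^{\otimes\alpha_{n_k}}$ for infinitely many $n_k$, and almost-containment from the fact that $(A\setminus A_i)_{n_k}=\emptyset$ whenever $k\ge i$. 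I expect no real obstacle — the heart of the argument is the single diagonal-selector trick at the outer $\fin$ coordinate, and the inductive hypothesis at lower ordinals is not needed to refine any of the chosen sections.
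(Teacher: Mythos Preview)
Your proof is correct and follows essentially the same diagonal-selector approach as the paper: both arguments pick, for each $k$, a first coordinate $n_k$ in the set of positive sections of $A_k$ (the paper does this via the ``standard form'' reduction and the choice $(\pi''A_k)(k)$, you via the sets $B_k$ and an explicit increasing choice $n_k\in B_k$), and then set $A=\bigcup_k\{n_k\}\times(A_k)_{n_k}$. Your observation that the inductive hypothesis is never actually invoked to refine the chosen sections is a nice clarification that the paper leaves implicit.
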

            
        \begin{proof}
            By induction on $\alpha$. For $\alpha=1$, $\fin$ is indeed a $p^+$-ideal. Suppose that $\fin^{\otimes\alpha}$ has proven to be a $p^+$-ideal, and let $\l A_i\mid i<\omega\r\subseteq (\fin^{\otimes\alpha+1})^+$ be a  decreasing sequence. We may assume that each $A_i$ is in standard form, namely, for every $n<\omega$,  either $(A_i)_n=\emptyset$ or $(A_i)_n\in (\fin^{\otimes\alpha})^+$. 
            Let $$A=\bigcup_{i<\omega}\{(\pi''A_i)(i)\}\times (A_i)_{(\pi''A_i)(i)}$$
            First we note that $A\in (\fin^{\otimes\alpha+1})^+$. To see this, note that since the $A_i's$ are decreasing then whenever $i<j$:
            \begin{enumerate}
                \item $\pi''A_j\subseteq \pi''A_i$.
                \item  For each $n<\omega$, $(A_j)_n\subseteq(A_i)_n$.
            \end{enumerate}
            It follows that for $i<j<\omega$, $(\pi''A_j)(j)\in \pi''A_i$ and $(\pi''A)(j)=(\pi''A_j)(j)>(\pi''A_i)(i)=(\pi''A)(i)$. So $\{(\pi''A_i)(i)\mid i<\omega\}\in \fin^+$ and for each $i$, $(A)_{(\pi''A)(i)}=(\pi''A_i)_{(\pi''A_i)(i)}\in (\fin^{\otimes\alpha})^+$. To see that $A\setminus A_i\in \fin^{\otimes\alpha}$, for each $i\leq j$,  $$(A)_{(\pi''A_j)(j)}(A_j)_{(\pi''A_j)(j)}\subseteq (A_i)_{(\pi''A_j)(j)}.$$ We conclude that $A\setminus A_i\subseteq \cup_{j<i}(\{(\pi''A_j)_j\}\times (A_j)_{(\pi''A_j)(j)}\in \fin^{\otimes \alpha}$. At limit steps, $\delta$ then the proof is completely analogous.
        \end{proof}
        \begin{corollary}
            Let $G$ be $P(\omega^\alpha)/\fin^{\otimes\alpha}$-generic over $V$.
            Then $G$ satisfies $\fin^{\otimes\alpha}$-p.i.p.
        \end{corollary}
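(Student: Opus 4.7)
The plan is to mimic the proof of the analogous statement for $\fin\otimes\fin$ given earlier, using the $\sigma$-completeness of $P(\omega^\alpha)/\fin^{\otimes\alpha}$ that was just established in the previous proposition. Since the preceding proposition shows that $\fin^{\otimes\alpha}$ is a $\sigma$-ideal, the forcing $P(\omega^\alpha)/\fin^{\otimes\alpha}$ (viewed as $((\fin^{\otimes\alpha})^+,\sse^{\fin^{\otimes\alpha}})$) is $\sigma$-closed, and in particular it adds no new $\omega$-sequences of ground-model sets.

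First, I would fix any sequence $\l X_n\mid n<\omega\r\sse G$. By $\sigma$-closedness of the forcing, this sequence lies in $V$; so by genericity there exists a condition $X\in G$ which forces $\check X_n\in\dot G$ for every $n<\omega$. I claim that this single $X$ witnesses the $\fin^{\otimes\alpha}$-p.i.p.\ for the given sequence, i.e., $X\setminus X_n\in\fin^{\otimes\alpha}$ for every $n<\omega$.

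The key (and only non-routine) step is to show $X\sse^{\fin^{\otimes\alpha}} X_n$ for each $n$. Suppose for contradiction that $X\setminus X_n\in(\fin^{\otimes\alpha})^+$ for some $n$. Then $X\setminus X_n$ is a legitimate condition and is stronger than $X$ in the forcing order (since $(X\setminus X_n)\setminus X=\emptyset\in \fin^{\otimes\alpha}$). But $X\setminus X_n$ is disjoint from $X_n$, so as a condition it forces $\check X_n\notin\dot G$ (any extension of it lies in $X\setminus X_n$ modulo the ideal, hence is $\sse^{\fin^{\otimes\alpha}}$-incompatible with $X_n$). This contradicts the fact that $X\setminus X_n\le X$ and $X\Vdash\check X_n\in\dot G$. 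Hence $X\setminus X_n\in\fin^{\otimes\alpha}$ for all $n$, establishing the $\fin^{\otimes\alpha}$-p.i.p.

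The main obstacle, such as it is, is ensuring that the forcing is indeed $\sigma$-closed, which is precisely what the preceding proposition delivers; all other steps are a direct genericity argument identical to the one given in the $\fin\otimes\fin$ case.
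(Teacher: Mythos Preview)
Your argument is correct and is exactly the approach the paper takes: the corollary is stated without proof, being an immediate consequence of the preceding proposition via the same genericity argument the paper gave earlier for the $\fin\otimes\fin$ case (item~(2) of the theorem in Section~2). Your write-up reproduces that argument verbatim with $\fin^{\otimes\alpha}$ in place of $\fin\otimes\fin$.
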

        \begin{lemma}
           If $G$ is $P(\omega^\alpha)/\fin^{\otimes\alpha}$-generic  over $V$, then $G$ is $\alpha$-rapid.
        \end{lemma}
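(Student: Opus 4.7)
The plan is to verify rapidity of $\pi_*(G)$ directly by a genericity argument. Since $P(\om^\al)/\fin^{\otimes\al}$ is $\sigma$-closed, it adds no new reals, so it suffices to show that for every increasing $f:\om\to\om$ in $V$, there exists $X\in\pi_*(G)$ with $\otp(X\cap f(n))\le n$ for all $n<\om$. For fixed $f$, I will show that the set
\[
D_f=\{B\in(\fin^{\otimes\al})^+ : \otp(\pi''B\cap f(n))\le n\text{ for all }n<\om\}
\]
is dense in $P(\om^\al)/\fin^{\otimes\al}$; once this is done, any $B\in G\cap D_f$ yields $X:=\pi''B\in V$ with the desired rapidity property, and since $B\sse \pi^{-1}[X]$ we get $\pi^{-1}[X]\in G$, i.e.\ $X\in\pi_*(G)$.

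Density of $D_f$ is established by induction on $\al$, following the recursive structure of $\fin^{\otimes\al}$. For $\al=1$ there is nothing to prove beyond Fact~\ref{fact: generic for P(omega)/FIN is selective}, since $\pi$ is the identity and the $P(\om)/\fin$-generic is already rapid. For successor $\al=\beta+1$, given a condition $A\in(\fin^{\otimes\beta+1})^+$, the set $I=\{n<\om:(A)_n\in(\fin^{\otimes\beta})^+\}$ is infinite by definition of $\fin^{\otimes\beta+1}$. Choose an infinite $J\sse I$ inductively by picking the $k$-th element of $J$ to lie above $f(k)$ (possible since $I$ is infinite), so that $\otp(J\cap f(n))\le n$ for every $n$. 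Then $B=\bigcup_{n\in J}\{n\}\times(A)_n$ is a subset of $A$, still in $(\fin^{\otimes\beta+1})^+$, and $\pi''B=J$ places $B$ in $D_f$. The limit case is completely analogous using the chosen cofinal sequence $\l\al_n\mid n<\om\r$: set $I=\{n:(A)_n\in(\fin^{\otimes\al_n})^+\}$ and thin $I$ to such a $J$.

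No serious obstacle arises, because the construction of the thinning depends only on the first-coordinate structure, which in all three cases (base, successor, limit) supplies an infinite $\fin$-positive set $I\sse\pi''A$ to work with; one only has to take some care that the sub-condition $B$ constructed in $V$ is genuinely below $A$ (here it is a literal subset, so even $B\sse A$ rather than just $B\sse^{\fin^{\otimes\al}}A$), which then guarantees $\pi''B$ can serve as the witnessing set $X$. Combining density with the genericity of $G$ and the observation that all $f\in V[G]\cap\om^\om$ already belong to $V$ completes the proof that $\pi_*(G)$ is rapid, i.e.\ $G$ is $\al$-rapid.
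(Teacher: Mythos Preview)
Your argument is correct and follows essentially the same approach as the paper: use $\sigma$-closure to reduce to $f\in V$, then run a density argument by thinning the first-coordinate projection of any condition so that it dominates $f$. The only cosmetic difference is that you frame the density proof as an induction on $\al$ (base, successor, limit), whereas the paper handles all $\al$ uniformly by first passing to a condition in ``standard form'' (each fiber either empty or positive) and then noting that its first-coordinate projection is infinite; your set $I=\{n:(A)_n\text{ is positive}\}$ is exactly that reduction, and your successor and limit cases never invoke an inductive hypothesis, so the case split is not actually needed.
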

        \begin{proof}
            Let $f:\omega\rightarrow\omega$ be any function in $V[G]$. By $\sigma$-closure of $P(\omega^\alpha)/\fin^{\otimes\alpha}$, $f\in V$. We proceed by a density argument, let $X\in P(\omega^\alpha)/\fin^{\otimes\alpha}$, shrink $X$ to $X_1\in P(\omega^\alpha)/\fin^{\otimes\alpha}$ so that $X_1$ is in standard form. By definition of $(\fin^{\otimes\alpha})^+$, $\pi''X_1$ is infinite and so we can shrink $\pi''X_1$ to $Y_1$, still infinite such that $Y_1(n)\geq f(n)$.
            Define $X_2=\cup_{n\in Y_1}\{n\}\times (X_1)_n$. Since $X_1$ was in standard form, for each $n\in Y_1$, $(X_1)_n$ is positive, and so, $X_2\in (\fin^{\otimes\alpha})^+$. Note that $X_2\subseteq X$ and $\pi''X_2=Y_1$. By density there is $X\in G$ such that  for every $n<\omega$, $(\pi''X)(n)\geq f(n)$ and therefore $\pi_*(G)$ is rapid, namely $G$ is $\alpha$-rapid.
        \end{proof}
        As corollary we obtain the following theorem:
        \begin{theorem}\label{thm.finalpha}
            Suppose that $G$ is a $P(\omega^\alpha)/\fin^{\otimes\alpha}$-generic ultrafilter over $V$. Then $G\cdot G\equiv_TG$.
        \end{theorem}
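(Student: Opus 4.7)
The plan is to assemble the theorem directly from the three preceding results in this section, which have been engineered precisely for this purpose. The engine is Corollary \ref{corollary: sufficient for FINalpha}, which says that whenever $U$ is an ultrafilter on $\omega^\alpha$ satisfying (a) $\fin^{\otimes\alpha}\subseteq U^*$, (b) the $\fin^{\otimes\alpha}$-p.i.p., and (c) $\alpha$-rapidity, then $U\cdot U\equiv_T U$. My task is therefore to verify these three hypotheses for the generic ultrafilter $G$.

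First I would note that (a) is immediate from the general properties of forcing with $P(\omega^\alpha)/\fin^{\otimes\alpha}$: since $G$ is a generic filter on the quotient, every element of $\fin^{\otimes\alpha}$ represents $\mathbf{0}$ and is thus in $G^*$. Next, (b) is precisely the content of the Corollary that $G$ satisfies $\fin^{\otimes\alpha}$-p.i.p., which itself is derived from the $\sigma$-completeness of the forcing (established in the preceding Proposition via transfinite induction on $\alpha$) by the same indirect argument used in Section 2 for the case $\alpha=2$. Finally, (c) is exactly the Lemma just proved, which establishes that $G$ is $\alpha$-rapid via a density argument that shrinks any condition in standard form to outrun a ground model function $f:\omega\to\omega$ on the first coordinate.

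With (a), (b), (c) in hand, Corollary \ref{corollary: sufficient for FINalpha} applies directly and yields $G\cdot G\equiv_T G$.

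There is no real obstacle in this final step; all the substantive work has been done in the preceding paragraphs of the section. The main conceptual achievement is the reduction via Corollary \ref{Cor: Sufficient condition for product} and Theorem \ref{thm.main}, which replaces the Fubini-power equivalence by the pair of conditions ($I$-p.i.p.\ and $\prod_n I\leq_T U$); the remaining content, namely that $\prod_{n<\omega}\fin^{\otimes\alpha}\equiv_T\omega^\omega$ and that $\omega^\omega\leq_T \pi_*(G)\leq_{RK} G$, is already packaged inside Corollary \ref{corollary: sufficient for FINalpha}. The proof is thus a one-line invocation of the three preparatory results.
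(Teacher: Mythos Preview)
Your proposal is correct and follows exactly the paper's own proof: verify that $\fin^{\otimes\alpha}\subseteq G^*$, that $G$ has the $\fin^{\otimes\alpha}$-p.i.p., and that $G$ is $\alpha$-rapid, then invoke Corollary~\ref{corollary: sufficient for FINalpha}. The paper's proof is indeed the one-line invocation you describe.
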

         \begin{proof}
        We proved that $\fin^{\otimes\alpha}\subseteq G^*$, $G$ satisfies $\fin^{\otimes\alpha}$-p.i.p.\ and that $G$ is $\alpha$-rapid. So by Corollary \ref{corollary: sufficient for FINalpha}, $G\cdot G\equiv_T G$.
        \end{proof}

Recalling Theorem \ref{Theorem: Natasha's result JSL}, 
$G_{\al}<_T\mathcal{U}_{\text{top}}$ and $G_{\al}$ is not basically generated for each $\al<\om_1$.
    The point is that although the complexity of the generic ultrafilter $G_{\al}$ increases with $\alpha$, it still satisfies $G\cdot G\equiv_TG<_T\mathcal{U}_{\text{top}}$

\section{Milliken-Taylor Ultrafilters}

In this section, we prove that Milliken-Taylor ultrafilters have the same Tukey type as their Fubini product. 
Milliken-Taylor ultrafilters 
are  ultrafilters on base set 
 $\FIN:=[\om]^{<\om}\setminus \{\emptyset\}$ which witness instances of Hindman's Theorem \cite{Hindman}.
They are the  analogues of Ramsey ultrafilters on the base set  $\FIN$,
 but they are not Ramsey ultrafilters, nor even p-points, as shown by Blass   in \cite{Blass87}, where they were called {\em stable ordered union ultrafilters}.
These ultrafilters have been widely investigated (see for instance \cite{Eisworth02} and \cite{Mildenberger11}).

We now define Milliken-Taylor ultrafilters, using notation from \cite{TodorcevicBK}.
For $n\le\infty$, $\FIN^{[n]}$ denotes the set of block sequences in $\FIN$ of length $n$, where a {\em block sequence}  is a sequence 
$\langle x_i: i<n\rangle \sse \FIN$ such $i<j<n$ implies $\max(x_i)<\min(x_j)$.
For $n<\om$ and a block sequence $X=\langle x_i: i<n\rangle\in \FIN^{[n]}$, $[X]=\{\bigcup_{i\in I} x_i: I \sse n\}$.
For an infinite block sequence $X=\langle x_i: i<\om\rangle\in \FIN^{[\infty]}$, 
$$
        [X]=\{\bigcup_{i\in I} x_i: I \in \FIN\}
 $$
For $X,Y\in \FIN^{[\infty]}$, define $Y\le X$ iff $[Y]\sse [X]$.
Given $X\in \FIN^{[\infty]}$ and $m\in\om$, $X/m$ denotes $\langle x_i:i\ge n\rangle$ where $n$ is least such that $\min(x_n)>m$.
    Define $Y\le^*X$ iff there is some $m$ such that $[Y/m]\sse [X]$.
    Related definitions for finite block sequences are similar.

\begin{definition}\label{def.FINIdeal}
    An ultrafilter $U$ on base set $\FIN$ is {\em Milliken-Taylor} iff 
    \begin{enumerate}
        \item
    For each $A\in U$, there is an infinite block sequence $X\in \FIN^{[\infty]}$
such that $[X]\sse A$ and $[X]\in U$; and 
\item 
For each sequence $\langle X_n:n<\om\rangle$ of block sequences such that $X_0\ge^* X_1\ge^*\dots$ and each $[X_n]\in U$,
there is a diagonalization $Y\in \FIN^{[\infty]}$ such that  $[Y]\in U$ and $X_n\ge^* Y$ for each $n<\om$.
\end{enumerate}
\end{definition}

  Thus, a
Milliken-Taylor ultrafilter $U$ has
$\{A\in U:\exists X\in\FIN^{[\infty]}\, (A=[X])\}$
as  a filter base, and such a filter base
has the property that 
 almost decreasing sequences have diagonalizations. 
In this sense, Milliken-Taylor ultrafilter behave like p-points even though, technically, they are not.
The following ideal corresponds to  property $(2)$:

\begin{definition}
    Let $I$ be the set of all $X\subseteq \FIN$ such that for some $N\in \om$, $\forall x\in X, x\cap N\neq\emptyset$.
\end{definition}
\begin{claim}
    $I$ is an ideal and $I\subseteq U^*$ for each Milliken-Taylor ultrafilter $U$.
    \end{claim}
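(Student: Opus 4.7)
The plan is to verify the two assertions independently: ideal-ness is a routine bookkeeping check, and $I\sse U^*$ follows almost immediately from the filter-base property $(1)$ in the definition of a Milliken--Taylor ultrafilter.

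First I would show that $I$ is an ideal. Closure under subsets is immediate from the definition: if $X\in I$ with witness $N$, and $Y\sse X$, then each $y\in Y$ is also an $x\in X$ and so $y\cap N\neq\emptyset$, giving $Y\in I$ with the same witness $N$. For closure under finite unions, given $X_1,X_2\in I$ with witnesses $N_1,N_2$, I would take $N=\max(N_1,N_2)$; then for every $x\in X_1\cup X_2$, the element $x$ meets either $N_1$ or $N_2$, hence meets $N$.

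Next, I would prove $I\sse U^*$ by showing $I\cap U=\emptyset$. The key translation is that, identifying $N\in\om$ with the set $\{0,1,\dots,N-1\}$, the condition $x\cap N\neq\emptyset$ is equivalent to $\min(x)<N$. Thus $X\in I$ means exactly that the minima $\{\min(x):x\in X\}$ are bounded in $\om$. Now suppose $X\in U$ for a Milliken--Taylor ultrafilter $U$; by property $(1)$ of Definition \ref{def.FINIdeal}, there is an infinite block sequence $Y=\l y_i:i<\om\r\in\FIN^{[\infty]}$ with $[Y]\sse X$. Since $Y$ is a block sequence, the minima $\min(y_i)$ tend to infinity, so $[Y]$ already contains elements $y_i\in X$ with unbounded minima. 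This contradicts $X\in I$, so $U\cap I=\emptyset$ as desired.

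There is no substantive obstacle here; the only thing to notice is the elementary fact that the defining witnesses of $I$ are translated via the convention $N=\{0,\dots,N-1\}$ into a bound on $\min(x)$, after which property $(1)$ immediately finishes the argument (we do not even need the diagonalization property $(2)$ for this claim). The statement is best viewed as motivation: the ideal $I$ is tailored precisely so that filters generated by $[X]$'s of block sequences avoid it, which is the setup needed in order to apply Corollary \ref{Cor: Sufficient condition for product} in the next step of the section.
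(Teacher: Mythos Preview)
Your proposal is correct and follows essentially the same approach as the paper: both verify downward closure and closure under binary unions via $\max(N_1,N_2)$, and both derive $I\cap U=\emptyset$ from property~(1) by noting that any $A\in U$ contains $[Y]$ for some infinite block sequence. Your explicit translation $x\cap N\neq\emptyset\iff \min(x)<N$ simply spells out what the paper leaves implicit in the phrase ``in particular, $A\notin I$''; the only item you omit is the (trivial) observation that $\emptyset\in I$, which in any case follows from downward closure once $I$ is nonempty.
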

   \begin{proof}
        Clearly, $\emptyset\in I$ and $I$ is downwards closed. To see that $I$ is closed under unions, let   $X,Y\in I$   and let $N_X, N_Y\in \om$ witness this.
        Then $\max(N_X,N_Y)$ witnesses that $X\cup Y\in I$. By condition $(1)$, every $A\in U$ contains $[X]$ for some infinite block sequence $X$; in particular, $A\notin I$. 
 \end{proof}
\begin{proposition}
    If $Y\leq^* X$ then $[Y]\setminus [X]\in I$.
\end{proposition}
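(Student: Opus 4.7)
The plan is to unpack $Y\le^* X$ directly and exhibit an explicit witness $N$ for $[Y]\setminus [X]\in I$. By definition, $Y\le^* X$ means that there is some $m\in\om$ such that $[Y/m]\sse [X]$. Writing $Y=\langle y_i:i<\om\rangle$, let $n$ be the least index with $\min(y_n)>m$, so $Y/m=\langle y_i:i\ge n\rangle$. Note that for each $i<n$ we have $\min(y_i)\le m$, hence $y_i\cap (m+1)\neq\emptyset$.

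Next I would pick an arbitrary $z\in [Y]\setminus[X]$ and analyze its block decomposition. By definition of $[Y]$, there is a nonempty finite $I\subseteq\om$ with $z=\bigcup_{i\in I}y_i$. If every index in $I$ were $\ge n$, then $z$ would lie in $[Y/m]\sse [X]$, contradicting $z\notin[X]$. Therefore some $i_0\in I$ satisfies $i_0<n$. For such $i_0$ we have $y_{i_0}\sse z$ and $y_{i_0}\cap(m+1)\neq\emptyset$, so $z\cap(m+1)\neq\emptyset$.

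Since $z$ was arbitrary, every element of $[Y]\setminus[X]$ meets $m+1$, so $N:=m+1$ witnesses $[Y]\setminus[X]\in I$. The argument is essentially a direct unfolding of the definitions; the only point that requires a moment's care is observing that a $\le^*$-witness $m$ automatically forces every ``bad'' combination $z$ to involve at least one initial block $y_{i_0}$, and that each initial block already lives inside the fixed finite interval determined by $m$. No real obstacle is expected.
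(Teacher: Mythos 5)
Your proof is correct and is essentially the same as the paper's: both unpack $Y\le^* X$ to get $m$ with $[Y/m]\sse [X]$ and observe that any $z\in [Y]\setminus [X]$ must use a block $y_{i}$ with $\min(y_{i})\le m$, so $N=m+1$ witnesses membership in $I$. Your version just spells out the details the paper leaves implicit.
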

\begin{proof}
    If $Y\leq^* X$ then there is $m$ such that $[Y/m]\subseteq [X]$ and so every element $b\in [Y]\setminus [X]$ must be a finite union of sets which includes some element below $m$. 
\end{proof}\begin{proposition}
    $U$ satisfies $I$-p.i.p.
\end{proposition}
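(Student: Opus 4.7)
The plan is to chain the two defining properties of a Milliken-Taylor ultrafilter together with the preceding proposition. Given a sequence $\langle A_n : n<\omega\rangle\subseteq U$, I would first construct recursively a $\le$-decreasing sequence of infinite block sequences $X_0\ge X_1\ge X_2\ge\cdots$ with $[X_n]\in U$ and $[X_n]\subseteq A_n$. Start by applying condition $(1)$ of Definition~\ref{def.FINIdeal} to $A_0$ to obtain $X_0\in \FIN^{[\infty]}$ with $[X_0]\in U$ and $[X_0]\sse A_0$. Given $X_n$ with $[X_n]\in U$, the set $A_{n+1}\cap [X_n]$ lies in $U$, so applying $(1)$ to it yields $X_{n+1}\in \FIN^{[\infty]}$ with $[X_{n+1}]\in U$ and $[X_{n+1}]\sse A_{n+1}\cap [X_n]\subseteq A_{n+1}$; in particular $X_{n+1}\le X_n$.

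Next, since $X_0\ge X_1\ge\cdots$ implies $X_0\ge^* X_1\ge^*\cdots$, I would apply condition $(2)$ to get a diagonalization $Y\in \FIN^{[\infty]}$ with $[Y]\in U$ and $Y\le^* X_n$ for every $n<\omega$. Setting $A:=[Y]$, clearly $A\in U$. For each $n$, the previous proposition gives $[Y]\setminus [X_n]\in I$, and since $[X_n]\subseteq A_n$ and $I$ is downward closed,
\[
A\setminus A_n \;=\; [Y]\setminus A_n \;\subseteq\; [Y]\setminus [X_n]\;\in I.
\]
Thus $A\sse^I A_n$ for every $n$, which is exactly the $I$-p.i.p.\ (as used, e.g., in the proofs of Claim~\ref{Claim: U is *U*} and of the $U\cdot U\le_T U\times \prod_n I$ proposition).

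There is no serious obstacle here: the Milliken-Taylor axioms $(1)$ and $(2)$ are tailored to give precisely the refinement-then-diagonalization pattern needed, and the previous proposition is built so that the relation $\le^*$ on block sequences translates exactly into $\sse^I$ on the corresponding $[X]$'s. The only thing one must be careful about is that condition $(2)$ requires the input sequence to be $\ge^*$-decreasing with each $[X_n]\in U$, both of which are ensured by the recursion in the first paragraph.
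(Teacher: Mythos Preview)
Your proof is correct and follows essentially the same approach as the paper: recursively shrink the $A_n$'s to a $\le$-decreasing sequence of block sequences using property~(1), diagonalize using property~(2), and invoke the preceding proposition to conclude that the diagonalization is an $I$-pseudo-intersection. You simply spell out the recursion (applying~(1) to $A_{n+1}\cap [X_n]$) more explicitly than the paper does.
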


\begin{proof}
    Let $\l A_n\mid n<\omega\r\subseteq U$. Then by property $(1)$ of $U$, we can shrink each $A_n$ to $[X_n]\in U$ such that $X_{n+1}\leq X_n$. By property $(2)$ there is $[X]\in U$ such that $[X]\leq^* [X_n]$ for every $n$. Thus $[X]\setminus [X_n]\in I$ and in particular $[X]\setminus A_n\in I$.
\end{proof}
\begin{proposition}
    $I\equiv_T\omega$.
\end{proposition}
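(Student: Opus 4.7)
The plan is to exhibit a single monotone map $f\colon\omega\to I$ that is simultaneously cofinal in $I$ and has range unbounded in $I$; by Schmidt's characterization, such an $f$ witnesses $I\le_T\omega$ (as a cofinal map from $\omega$ into $I$) and $\omega\le_T I$ (as an unbounded map out of $\omega$), giving the desired equivalence.

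Concretely, I would define $f\colon\omega\to I$ by
\[
f(N)=\{x\in\FIN:x\cap N\neq\emptyset\},
\]
identifying $N$ with $\{0,1,\dots,N-1\}$ as usual. Each $f(N)$ lies in $I$ with witness $N$, and the monotonicity $N\le M\Rightarrow f(N)\subseteq f(M)$ is immediate from the definition.

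First I would verify the cofinality of $f$ in $I$: any $X\in I$ has some witness $N$, whence $X\subseteq f(N)$. So $\{f(N):N<\omega\}$ is cofinal in $I$, which combined with monotonicity yields $I\le_T\omega$. Next I would show the range of $f$ is unbounded in $I$. Suppose, for contradiction, that some $Z\in I$ with witness $M$ bounds $\{f(N):N<\omega\}$. The singleton block $\{M\}$ belongs to $f(M+1)$ because $\{M\}\cap(M+1)=\{M\}\neq\emptyset$, and hence $\{M\}\in Z$. But $\{M\}\cap M=\emptyset$ contradicts the assumption that $M$ witnesses $Z\in I$. So no $Z\in I$ bounds the image of $f$, meaning $f$ is unbounded and gives $\omega\le_T I$.

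There is no substantive obstacle: the proof reduces to the observation that the minimal-witness function stratifies $I$ into an $\omega$-chain whose union is not in $I$, which pins the Tukey type at exactly $\omega$. This is exactly the ingredient needed to combine with the preceding $I$-p.i.p.\ property (and $\prod_{n<\omega}I\equiv_T\omega^\omega$, via Fact~\ref{fact: taking power of omega}) when later invoking Corollary~\ref{Cor: Sufficient condition for product} for Milliken-Taylor ultrafilters.
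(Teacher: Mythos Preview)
Your proof is correct and follows essentially the same approach as the paper: both define the same map $f(N)=\{x\in\FIN:x\cap N\neq\emptyset\}$ and use its monotonicity, cofinality (giving $I\le_T\omega$), and unboundedness (giving $\omega\le_T I$). The only cosmetic difference is that the paper verifies unboundedness by noting $\bigcup_{n\in A}f(n)=\FIN\notin I$ for every infinite $A$, whereas you argue by contradiction with the single block $\{M\}$; combined with monotonicity, your version yields the same conclusion.
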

\begin{proof}
    Define $f:\omega\rightarrow I$ by $f(n)=\{x\in \FIN\mid x\cap n\neq\emptyset\}$.  Clearly $f$ is monotone, and by definition of $I$, $f$ is cofinal. It is also clear that $f$ is unbounded since $\cup_{n\in A}f(n)=\FIN$, whenever $A\in[\omega]^{\omega}$ (and $I$ in a proper ideal).
\end{proof}

The projection maps  $\min$  and $\max$ from $\FIN$ to $\om$ are clear.
Given a Milliken-Taylor ultrafilter $U$,
let $U_{\min}$ and $U_{\max}$ denote the Rudin-Keisler projections  of $U$ according to $\min$ and $\max$, respectively. 
Blass showed in \cite{Blass87} that $U_{\min}$ and $U_{\max}$ are both Ramsey ultrafilters.  
Hence,  it follows that 
$\omega^\omega\leq_T U_{\min}\leq_{RK}U$, and therefore we have the following corollary:

\begin{corollary}
     $\prod_{n<\omega}I\leq_T U$.
\end{corollary}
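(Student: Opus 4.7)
The plan is to chain together the ingredients that are already in place just above the corollary. First, I would observe that by the preceding proposition $I\equiv_T\omega$, and since Tukey equivalence is preserved under taking countable products componentwise (by monotone cofinal maps acting coordinatewise), this gives
\[
\prod_{n<\omega} I \equiv_T \prod_{n<\omega}\omega = \omega^\omega,
\]
where the product carries the everywhere-domination order.

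Next, I would invoke the displayed remark immediately preceding the corollary, that $\omega^\omega \leq_T U_{\min} \leq_{RK} U$. Here the first inequality comes from $U_{\min}$ being Ramsey, hence a rapid $p$-point, together with the fact that any rapid ultrafilter on $\omega$ is Tukey above $\omega^\omega$ (via the cofinal enumeration map from Fact 1.16). The second inequality is by definition of $U_{\min}$ as the Rudin-Keisler projection of $U$ via $\min:\FIN\to\omega$.

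Finally, I would use that Rudin-Keisler reducibility implies Tukey reducibility (noted earlier in the paper, just before Lemma 3.2): if $f_*(U)=U_{\min}$, then the map $A\mapsto f^{-1}[A]$ is a monotone cofinal map from $U_{\min}$ into $U$. Stringing the chain together yields
\[
\prod_{n<\omega} I \equiv_T \omega^\omega \leq_T U_{\min} \leq_T U,
\]
which is the desired conclusion. There is no real obstacle here; the work has been done in the three propositions preceding the corollary, and this step is just the final assembly that feeds the hypotheses of Corollary 1.18 (the sufficient condition $\prod_{n<\omega}I \leq_T U$ combined with $I$-p.i.p. and $I\subseteq U^*$) to conclude $U\cdot U\equiv_T U$.
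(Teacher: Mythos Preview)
Your proof is correct and follows exactly the approach the paper intends: the paper simply notes (in the sentence preceding the corollary) that $\omega^\omega\leq_T U_{\min}\leq_{RK}U$, leaving the reader to combine this with $I\equiv_T\omega$ to get $\prod_{n<\omega}I\equiv_T\omega^\omega\leq_T U$. You have spelled out each link in that chain, including the coordinatewise product argument and the passage from $\leq_{RK}$ to $\leq_T$, but the route is the same.
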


\begin{theorem}\label{thm.MillikenTaylor}
    Suppose that $U$ is a  Milliken-Taylor ultrafilter.  Then $U\cdot U\equiv_TU$.
\end{theorem}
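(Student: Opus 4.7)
The plan is to observe that all the hypotheses of Corollary \ref{Cor: Sufficient condition for product} have already been verified for Milliken-Taylor ultrafilters in the preceding propositions, so the theorem will follow by a one-line invocation.

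First, I would recall that the ideal $I = \{X \subseteq \FIN : \exists N \in \om,\ \forall x \in X,\ x \cap N \neq \emptyset\}$ satisfies $I \subseteq U^*$ (proved in the preceding claim, using condition $(1)$ of the definition of Milliken-Taylor ultrafilter to rule out elements of $I$ from $U$). Second, by the preceding proposition, $U$ satisfies the $I$-p.i.p.: given $\langle A_n : n < \om\rangle \subseteq U$, property $(1)$ lets us refine each $A_n$ to some $[X_n] \in U$, property $(2)$ (after passing to $\le^*$-decreasing sequence) delivers a diagonalization $[X] \in U$ with $[X] \le^* [X_n]$, and then $[X] \setminus A_n \subseteq [X] \setminus [X_n] \in I$.

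Next, I would invoke the computation $I \equiv_T \omega$ (established just above via the cofinal map $n \mapsto \{x \in \FIN : x \cap n \neq \emptyset\}$), which together with Fact \ref{fact: taking power of omega} gives $\prod_{n<\om} I \equiv_T \prod_{n<\om} \om = \om^\om$. Finally, Blass's result that $U_{\min}$ is a Ramsey (hence rapid) ultrafilter yields $\om^\om \le_T U_{\min} \le_{RK} U$, so $\prod_{n<\om} I \le_T U$, exactly as recorded in the corollary preceding the theorem.

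With both hypotheses in hand, Corollary \ref{Cor: Sufficient condition for product} applied to the ideal $I$ yields $U \cdot U \equiv_T U$, finishing the proof. There is really no obstacle here: all the nontrivial content---the combinatorial use of properties $(1)$ and $(2)$ from Definition \ref{def.FINIdeal} to verify $I$-p.i.p., and Blass's theorem that the $\min$-projection is Ramsey---has been carried out in the preceding string of propositions, so the theorem is a clean corollary of the abstract framework built up in Section 1.
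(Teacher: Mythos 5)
Your proposal is correct and matches the paper's own proof: both assemble the same ingredients (the claim that $I\subseteq U^*$, the $I$-p.i.p.\ proposition, $I\equiv_T\omega$ together with Blass's theorem that $U_{\min}$ is Ramsey giving $\prod_{n<\omega}I\leq_T U$) and conclude by the same one-line application of Corollary \ref{Cor: Sufficient condition for product}.
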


\begin{proof}
    We proved that if $U$ is 
    Milliken-Taylor, then for the ideal $I$, we have that $I\subseteq U^*$, $I$-p.i.p., and $\prod_{n<\omega}I\leq_T U$. By Corollary \ref{Cor: Sufficient condition for product}, we conclude that $U\cdot U\equiv_T U$.
\end{proof}

We conclude this section with a short proof  that the $\min$-$\max$ projection of $U$ is Tukey equivalent to its Fubini product with itself.
The map $\min$-$\max:\FIN\ra\om\times\om$  is defined by $\min$-$\max(x)=(\min(x),\max(x))$, for 
$x\in \FIN$.  Let 
$U$ be a Milliken-Taylor ultrafilter and let 
$U_{\min,\max}$ denote the ultrafilter on $\om\times\om$  which is the $\min$-$\max$ Rudin-Kesiler projection of $U$.
Blass showed in \cite{Blass87} that $U_{\min,\max}$ is isomorphic to $U_{\min}\cdot U_{\max}$ and hence,
$U_{\min,\max}$
is not a p-point.
Dobrinen and Todorcevic showed in \cite{Dobrinen/Todorcevic11} that $U_{\min,\max}$ is not a q-point, but is rapid, and that, assuming CH, $U_{\min}$ and $U_{\max}$ are Tukey strictly below $U_{\min,\max}$ which is Tukey strictly below $U$.
It follows from the proof of 
Theorem 72 in \cite{Dobrinen/Todorcevic11} 
that $U_{\min,\max}$ has appropriately defined diagonalizations and hence,  has the $J$-p.i.p.\ for the ideal $J=\{\min$-$\max(A): 
 A\in I\}\sse U_{\min,\max}^*$ on $\om\times\om$; hence  the work in this paper implies the following theorem.
However, we give a shorter proof by combining results from  \cite{Blass87} and \cite{Dobrinen/Todorcevic11}.

\begin{corollary}
If $U$ is a Milliken-Taylor ultrafilter, then 
$$U_{\min,\max}\equiv_T 
U_{\min,\max}\cdot U_{\min,\max}.$$
\end{corollary}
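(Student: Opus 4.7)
The plan is to invoke Theorem~\ref{thm.main} and reduce the claim to verifying condition (2), namely $\prod_{n<\om}U_{\min,\max}\equiv_T U_{\min,\max}$; clause $(1)\Leftrightarrow(2)$ of that theorem will then immediately deliver $U_{\min,\max}\cdot U_{\min,\max}\equiv_T U_{\min,\max}$. The external inputs will be, from \cite{Blass87}, that $U_{\min,\max}$ is Rudin-Keisler isomorphic to $U_{\min}\cdot U_{\max}$ and that both $U_{\min}$ and $U_{\max}$ are Ramsey, and from \cite{Dobrinen/Todorcevic11}, that Ramsey ultrafilters $V$ satisfy $V\cdot V\equiv_T V$.

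First, I would collapse the Fubini product inside $U_{\min,\max}$ to a Cartesian product. Applying Corollary~\ref{cor.1.9} with $U:=U_{\min}$ and $V:=U_{\max}$---legitimate because $U_{\max}\cdot U_{\max}\equiv_T U_{\max}$---this yields
$$U_{\min,\max}\equiv_T U_{\min}\cdot U_{\max}\equiv_T U_{\min}\times U_{\max},$$
where $\times$ denotes the everywhere domination order on the product poset. This step is crucial because $\times$ distributes over $\prod_{n<\om}$ in a clean way, whereas $\cdot$ does not.

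Next, I would compute $\prod_{n<\om}U_{\min,\max}$ via the evident order isomorphism $\prod_{n<\om}(P\times Q)\cong(\prod_{n<\om}P)\times(\prod_{n<\om}Q)$ (regarding a sequence of pairs as a pair of sequences), obtaining
$$\prod_{n<\om}U_{\min,\max}\equiv_T \Big(\prod_{n<\om}U_{\min}\Big)\times \Big(\prod_{n<\om}U_{\max}\Big).$$
Now Theorem~\ref{thm.main}, applied separately to $U_{\min}$ and $U_{\max}$, shows that $\prod_{n<\om}U_{\min}\equiv_T U_{\min}$ and $\prod_{n<\om}U_{\max}\equiv_T U_{\max}$, since both are Ramsey. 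Substituting, $\prod_{n<\om}U_{\min,\max}\equiv_T U_{\min}\times U_{\max}\equiv_T U_{\min,\max}$, and a final application of Theorem~\ref{thm.main} gives $U_{\min,\max}\cdot U_{\min,\max}\equiv_T U_{\min,\max}$.

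The main obstacle, insofar as there is one, will be keeping the two quite different products $\cdot$ and $\times$ straight and performing substitutions only where Tukey equivalence is actually preserved. In particular, one must resist any temptation to distribute a Fubini product over a Cartesian product directly; the key trick is to convert $\cdot$ into $\times$ at the outset (via Corollary~\ref{cor.1.9}) and to work with $\times$ thereafter. Everything else is routine given the two external inputs quoted above, which is why the argument is ``short'' as advertised.
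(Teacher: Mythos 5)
Your proof is correct, and it reaches the conclusion by a genuinely different route from the paper's. Both arguments start from Blass's isomorphism $U_{\min,\max}\cong U_{\min}\cdot U_{\max}$ and the fact that $U_{\min}$, $U_{\max}$ are Ramsey, but they diverge afterwards: the paper stays entirely inside the algebra of Fubini products, invoking the Dobrinen--Todorcevic commutativity result $U\cdot V\equiv_T V\cdot U$ for rapid p-points and the idempotence $V\cdot V\equiv_T V$ to collapse $(U\cdot V)\cdot(U\cdot V)$ down to $U\cdot V$ by rearranging factors. You instead convert the Fubini product to a Cartesian product at the outset via Corollary \ref{cor.1.9}, pass to $\prod_{n<\om}$ using the order isomorphism $\prod_{n<\om}(P\times Q)\cong(\prod_{n<\om}P)\times(\prod_{n<\om}Q)$, absorb each factor by the equivalence $(1)\Leftrightarrow(2)$ of Theorem \ref{thm.main} applied to the Ramsey projections, and then close the loop with $(2)\Rightarrow(1)$. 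Your version has the advantage of not needing the commutativity theorem for rapid p-points at all (only the first clause of Corollary \ref{cor.1.9} and the machinery of Section 1), and it showcases the $\prod_{n<\om}U\equiv_T U$ characterization that drives the rest of the paper; the paper's version is a one-line algebraic manipulation that requires no bookkeeping about Cartesian versus Fubini products. The only steps you leave implicit --- that Tukey equivalence is preserved under countable everywhere-domination products and under Cartesian products factorwise --- are standard and are used tacitly elsewhere in the paper, so this is not a gap.
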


\begin{proof}
By results of Blass in \cite{Blass87},
$U_{\min,\max}\cong U_{\min}\cdot U_{\max}$, and both $U_{\min}$
and $U_{\max}$ are Ramsey ultrafilters.
For rapid p-points $U,V$, a result in
\cite{Dobrinen/Todorcevic11} showed that   $U\cdot V\equiv_T V\cdot U$, and hence,
$$
(U\cdot V)\cdot (U\cdot V)
\equiv_T U\cdot (V\cdot V)\cdot U
\equiv_T U\cdot V\cdot U
\equiv_T U\cdot U\cdot V
\equiv_T U\cdot V.
$$
The corollary follows.
\end{proof}

\begin{remark}
The theorems in this section should generalize to Milliken-Taylor ultrafilters on $\FIN^{[\infty]}_k$ as well as their Rudin-Keisler projections, as their diagonalization properties will imply the $I$-p.i.p.\ for the naturally associated ideal $I$.
\end{remark}
        
\section{Further directions and open questions}
\begin{question}
    Is it a $ZFC$ theorem that for any two ultrafilters $U,V$ over $\omega$,   $U\cdot V\equiv_T V\cdot U$?
\end{question}
For $\kappa$-complete ultrafilters over measurable cardinals $\kappa$, this is indeed the case, as was proved by the authors in  \cite{TomNatasha}. However, the proof essentially uses the well foundedness of the ultrapower by a $\kappa$-complete ultrafilter $U$.

    A natural strategy to answer the previous question would be to take $U$ such that $U<_TU\cdot U$. The only constructions of  ultrafilters $U$ such that $U<_TU\cdot U$ ensure that $U\not\geq_T \omega^\omega$. By the results of this paper we can generate examples where $U\not\geq_T \prod_{n<\omega} I$ for some ideal $I$ such that $U$ is $I$-p.i.p. 
    
    Using such $U$, we need to find an ultrafilter $V$ such that $U\cdot V\not\equiv_T V\cdot U$. We know that following hold:
    $$U\cdot V\equiv_TU\times V\cdot V, \ \ \ V\cdot U\equiv_T V\times U\cdot U$$
So natural assumptions would be to require that $V\equiv_TV\cdot V$, and in order for $V$ not to interfere with the assumption $U<_TU\cdot U$, in order to have $V\leq_T U$. This guarantees that
$$U\cdot V\equiv_TU<_T U\cdot U\equiv_T V\cdot U$$
However, the assumptions above are not consistent since if $V\cdot V\equiv_T V$ then $V\geq_T \omega^\omega$, and therefore if $V\leq_T U$ then also $U\geq_T \omega^\omega$.  This leads to the following question:
\begin{question}
    Is it consistent that there are two ultrafilters $U,V$ such that $V\equiv_T V\cdot V\leq_T U<_TU\cdot U$? Or more precisely, is the class of ultrafilters which are Tukey reducible to their Fubini product upwards closed with respect to the Tukey order?
\end{question}
It seems that  the Tukey type of $\omega^\omega$ plays an important role in the calculations of the Tukey type of $U\cdot U$:
\begin{question}
    Is there an ultrafilter $U$ such that $\omega^\omega\le_T U<_T U\cdot U$? 
\end{question}
\begin{question}
    Is it consistent to have an ultrafilter $U$ such that $U$ is not rapid but $U\geq_T \omega^\omega$? What about $U$ which is a $p$-point?
\end{question}
\begin{question}
Is there a $p^+$-ideal $I$ on a countable set $X$ such that some $P(X)/I$ generic ultrafilter $U$ is Tukey-top?
\end{question}

\begin{question}
    Is it true that for every $p^+$-ideal $I$ on a countable set $X$, a generic ultrafilter $U$ on $P(X)/I$ satisfies $U\cdot U\equiv_TU$?
\end{question}

\bibliographystyle{amsplain}
\bibliography{ref}

\providecommand{\bysame}{\leavevmode\hbox to3em{\hrulefill}\thinspace}
\providecommand{\MR}{\relax\ifhmode\unskip\space\fi MR }
\providecommand{\MRhref}[2]{%
  \href{http://www.ams.org/mathscinet-getitem?mr=#1}{#2}
}
\providecommand{\href}[2]{#2}
\begin{thebibliography}{10}

\bibitem{BST22}
Karen Bakke~Haga, David Schrittesser, and Asger T{\"{o}}rnquist, \emph{Maximal
  almost disjoint families, determinacy, and forcing}, Journal of Mathematical
  Logic \textbf{22} (2022), no.~1, 2150026, 42 pp.

\bibitem{TomNatasha}
Tom Benhamou and Natasha Dobrinen, \emph{Cofinal types of ultrafilters over
  measurable cardinals}, submitted (2023), arXiv:2304.07214.

\bibitem{Blass1970}
Andreas Blass, \emph{{O}rderings of ultrafilters}, ProQuest LLC, Ann Arbor, MI,
  1970, Thesis (Ph.D.)--Harvard University. \MR{2939947}

\bibitem{Blass87}
\bysame, \emph{Ultrafilters related to {H}indman's finite-unions theorem and
  its extensions}, Contemporary Mathematics \textbf{65} (1987), 89--124.

\bibitem{Blass/Dobrinen/Raghavan15}
Andreas Blass, Natasha Dobrinen, and Dilip Raghavan, \emph{The next best thing
  to a p-point}, Journal of Symbolic Logic \textbf{80} (15), no.~3, 866--900.

\bibitem{DobrinenInPrep}
Natasha Dobrinen, \emph{Initial {R}udin-{K}eisler and tukey structures of
  ultrafilters forced by infinite-dimensional {E}llentuck spaces},  (2015), in
  preparation.

\bibitem{DobrinenTukeySurvey15}
\bysame, \emph{Survey on the {T}ukey theory of ultrafilters}, Zbornik Radova
  \textbf{17} (2015), no.~25, 53--80.

\bibitem{DobrinenJSL15}
\bysame, \emph{High dimensional {E}llentuck spaces and initial chains in the
  {T}ukey structure of non-p-points}, Journal of Symbolic Logic \textbf{81}
  (2016), no.~1, 237--263.

\bibitem{DobrinenJML16}
\bysame, \emph{Infinite-dimensional {E}llentuck spaces and
  {R}amsey-classification theorems}, Journal of Mathematical Logic \textbf{16}
  (2016), no.~01, 1650003, 37 pp.

\bibitem{DobrinenSEALS}
\bysame, \emph{Topological {R}amsey spaces dense in forcings}, ch.~3--58, World
  Scientific Publishing Co. Pte. Ltd., 2021.

\bibitem{DobrinenHathaway20}
Natasha Dobrinen and Daniel Hathaway, \emph{Classes of barren extensions},
  Journal of Symbolic Logic \textbf{86} (2020), no.~1, 178--209.

\bibitem{Dobrinen/Mijares/Trujillo14}
Natasha Dobrinen, Jos{\'{e}}~G. Mijares, and Timothy Trujillo,
  \emph{Topological {R}amsey spaces from {F}ra{\"{i}}ss{\'{e}} classes,
  {R}amsey-classification theorems, and initial structures in the {T}ukey types
  of p-points}, Archive for Mathematical Logic, special issue in honor of James
  E. Baumgartner \textbf{56} (2017), no.~7-8, 733--782, (Invited submission).

\bibitem{Dobrinen/Todorcevic11}
Natasha Dobrinen and Stevo Todorcevic, \emph{Tukey types of ultrafilters},
  Illinois Journal of Mathematics \textbf{55} (2011), no.~3, 907--951.

\bibitem{Dobrinen/Todorcevic14}
\bysame, \emph{A new class of {R}amsey-classification {T}heorems and their
  applications in the {T}ukey theory of ultrafilters, {P}art 1}, Transactions
  of the American Mathematical Society \textbf{366} (2014), no.~3, 1659--1684.

\bibitem{Eisworth02}
Todd Eisworth, \emph{Forcing and stable ordered-union ultrafilters}, Journal of
  Symbolic Logic \textbf{67} (2002), no.~1, 449--464.

\bibitem{Hindman}
Neil Hindman, \emph{Finite sums from sequences within cells of a partition of
  $n$}, Journal of Combinatorial Theory, Series A \textbf{17} (1974), 1--11.

\bibitem{HrusaVerner}
Jonathan~L. Hru\v{s}ak, Michael~Verner, \emph{Adding ultrafilters by definable
  quotients}, (English summary) Rend. Circ. Mat. Palermo \textbf{60} (2011),
  no.~3, 445–454.

\bibitem{Isbell65}
John~R. Isbell, \emph{The category of cofinal types. {II}}, Transactions of the
  American Mathematical Society \textbf{116} (1965), 394--416.

\bibitem{Juhasz67}
Istvan Juh{\'{a}}sz, \emph{Remarks on a theorem of {B}.\
  {P}osp{\'{i}}{\v{s}}il}, General Topology and its Relations to Modern
  Analysis and Algebra, Academia Publishing House of the Czechoslovak Academy
  of Sciences, Praha, 1967, pp.~205--206.

\bibitem{Kurilic15}
Milo\v{s} Kurili\'{c}, \emph{Forcing with copies of countable ordinals},
  Proceedings of the American Mathematical Society \textbf{143} (2015), no.~4,
  1771--1784.

\bibitem{Kuzeljevic/Raghavan18}
Borisa Kuzeljevic and Dilip Raghavan, \emph{A long chain of p-points}, Journal
  of Mathematical Logic \textbf{19} (18), no.~1, 1850004, 38 pp.

\bibitem{Mildenberger11}
Heike Mildenberger, \emph{On {M}illiken-{T}aylor ultrafilters}, Notre Dame
  Journal of Formal Logic \textbf{52} (2011), no.~4, 381--394.

\bibitem{Milovich08}
David Milovich, \emph{Tukey classes of ultrafilters on $\om$}, Topology
  Proceedings \textbf{32} (2008), 351--362.

\bibitem{Milovich12}
\bysame, \emph{Forbidden rectangles in compacta}, Topology and Its Applications
  \textbf{159} (2012), no.~14, 3180--3189.

\bibitem{Raghavan/Shelah17}
Dilip Raghavan and Saharon Shelah, \emph{Embedding partial orders into the
  p-points under {R}udin-{K}eisler and {T}ukey reducibility}, Transactions of
  the American Mathematical Society \textbf{369} (17), no.~6, 4433--4455.

\bibitem{Raghavan/Todorcevic12}
Dilip Raghavan and Stevo Todorcevic, \emph{Cofinal types of ultrafilters},
  Annals of Pure and Applied Logic \textbf{163} (2012), no.~3, 185--199.

\bibitem{Raghavan/Verner19}
Dilip Raghavan and Jonathan~L. Verner, \emph{Chains of p-points}, Canadian
  Mathematics Bulletin \textbf{62} (2019), no.~4, 856--868.

\bibitem{Schmidt55}
J{\"{u}}rgen Schmidt, \emph{Konfinalit{\"{a}}t}, Zeitschrift f{\"{u}}r
  Mathematische Logik und Grundlagen der Matematik \textbf{1} (1955), 271--303.

\bibitem{Szymanski/Zhou}
Andrzej Szym\'{a}nski and Hao~Xua Zhou, \emph{The behavior of $\omega^{2*}$
  under some consequences of {M}artin’s axiom}, in General Topology and Its
  Relations to Modern Analysis and Algebra. V, (ed. J.No\'{v}ak) Sigma Ser.
  Pure Math.3 , Heldermann Verlag, 1983, p.~577–584.

\bibitem{TodorcevicBK}
Stevo Todorcevic, \emph{Introduction to {R}amsey spaces}, vol. 174, Princeton
  University Press, 2010.

\bibitem{Tukey40}
John~W. Tukey, \emph{Convergence and uniformity in topology}, Princeton
  University Press, 1940.

\end{thebibliography}
\end{document}